\def\M{\mathbb{M}}
\def\h{\mathbb{H}}
\def\r{\mathbb{R}}
\def\n{\mathbb{N}}
\def\s{\mathbb{S}}
\titleformat{\subsection}[runin]
{\bfseries} {\thesubsection{.}}{0.15cm}{}[.]
\titleformat{\subsubsection}[runin]
{\em}{\thesubsubsection{.}}{0.15cm}{}[.]
\newtheorem{theorem}{Theorem}[section]
\newtheorem{corollary}[theorem]{Corollary}
\newtheorem{remark}[theorem]{Remark}
\theoremstyle{definition}
\numberwithin{equation}{section}
\numberwithin{figure}{section}
\begin{document}

\fancyhead[CO]{Stable constant mean curvature surfaces} 
\fancyhead[CE]{ R. Souam} 
\fancyhead[RO,LE]{\thepage} 

\thispagestyle{empty}

\vspace*{1cm}
\begin{center}
{\bf\LARGE Stable  constant mean curvature surfaces with free boundary  in slabs }

\vspace*{0.5cm}

{\large\bf Rabah Souam}
\end{center}

\noindent Institut de Math\'{e}matiques de Jussieu-Paris Rive Gauche,  CNRS,  UMR 7586, B\^{a}timent Sophie Germain,  Case 7012, 75205  Paris Cedex 13, France.

\noindent e-mail: {\tt rabah.souam@imj-prg.fr}

\vspace*{0.1cm}

\vspace*{1cm}

\begin{quote}
{\small
\noindent {\bf Abstract}\hspace*{0.1cm}\vspace*{0.2cm} 
We study stable constant mean curvature (CMC) hypersurfaces $\Sigma$ with free boundary in slabs in a  product space $M\times\r,$ where $M$ is an orientable Riemannian manifold. We obtain a characterization of stable cylinders and prove that if $\Sigma$  is not a cylinder then it is locally a vertical graph. Moreover, in case $M$ is $\h^n,\r^n$ or $\s_+^n,$ if  each  component of $\partial\Sigma$ is embedded,  then  $\Sigma$ is rotationally invariant. When $M$ has dimension 2 and Gaussian curvature bounded from below by a positive constant $\kappa,$ we prove there is no stable CMC with free boundary connecting the boundary components  of a slab of width $l\geq 4\pi/\sqrt{3\kappa}.$ 
We also show that a stable capillary surface of genus 0 in  a warped product $[0,l]\times_f M$ where 
$M=\r^2, \h^2$ or $\s^2,$ is rotationally invariant. Finally, we prove that a stable CMC  immersion of a closed  surface 
in $M\times\s^1(r),$ where $M$ is a  surface with Gaussian curvature bounded from below by a positive constant $\kappa$ and $\s^1(r)$ the circle of radius $r,$  lifts to $M\times\r$ provided 
$r\geq4/\sqrt{3\kappa}.$

\vspace*{0.2cm}

\noindent{\bf Keywords}\hspace*{0.1cm}  constant mean curvature, free boundary, capillary surfaces, stability.

\vspace*{0.2cm}

\noindent{\bf Mathematics Subject Classification (2010)}\hspace*{0.1cm} 53A10, 49Q10, 53C42, 76B45.}

\end{quote}


\section{Introduction }\label{sec:intro}

Let $W$ be an oriented smooth Riemannian manifold of dimension $n\geq 3$ and $\mathcal B\subset W$ a closed domain with smooth boundary. We consider  constant mean curvature (CMC) hypersurfaces with free boundary in $\mathcal B,$ that is, CMC hypersurfaces  whose interior is contained  in $\mathcal B$ and whose boundary lies on $\partial\mathcal B $ and which meet  orthogonally   $\partial\mathcal B.$   They are stationary for the area functional for variations preserving the {\it enclosed volume}. It is  interesting to study  CMC hypersurfaces with free boundary which are stable, that is, those for which the second variation of the area is nonnegative for all volume-preserving variations. This problem arises naturally when 
studying the isoperimetric problem in  $\mathcal B.$  More generally,  a capillary hypersurface in  $\mathcal B$  is a CMC hypersurface in $\mathcal B$ with boundary on $\partial \mathcal B$ and meeting $\partial\mathcal B$ at a constant angle. Capillary hypersurfaces are also stationary for a functional which is a linear combination of  the area of the hypersurface and the area of the {\it domain enclosed} by its boundary on $\partial \mathcal B,$ for volume-preserving variations. The study of stability of capillary hypersurfaces in balls  or with planar boundaries in space forms has attracted a lot of attention very recently, see for instance  \cite{A-S, barbosa, choe-koiso, li-xiong, nunes, wang-xia}.

We are mainly interested in this paper in  stable CMC hypersurfaces with free boundary in slabs of a  product space $M\times\r,$  where $M$ is an orientable  Riemannian manifold  of dimension $\geq 2.$ A slab in $M\times\r$ is the region between two slices $M\times\{t\}.$ Without loss of generality, a slab of width $l>0$ in $M\times\r$ will be taken to be   the domain  $M\times[0,l].$ The special case $M=\r^n$ was considered by Athanassenas \cite{athanassenas} and Vogel \cite{vogel}, for $n=2$,  and by Pedrosa and Ritor\'e \cite{pedrosa-ritore},  and by Ainouz and Souam \cite{A-S} for any $n\geq 2.$ 

The simplest examples of CMC hypersurfaces with free boundary in a slab $M\times[0,l]$ are obtained by taking cylinders above closed CMC hypersurfaces in $M.$  Our first result   
(Theorem \ref{product}) gives   a characterization of stable cylinders in terms of the first eigenvalue of the stability operator of the base and the width of the slab. When $M$ is the Euclidean space $\r^n,$ the hyperbolic space $\h^n$ or the unit sphere $\s^n,$ this characterization of stable tubes is  explicit  in terms of their radius and the width
(Corollary \ref{tubes}). 

In the general case, we show (Theorem \ref{thm: free boundary}) that a stable free boundary CMC hypersurface in $M\times [0,l]$ that is not a cylinder has to be locally a vertical graph. It is moreover a global graph in case each of its boundary components is embedded and $M$ is simply connected. When $M$ is the Euclidean space $\r^n,$ the hyperbolic space $\h^n$ or 
a hemisphere $\s_+^n,$   the previous hypotheses imply that the hypersurface is rotationally invariant. This extends results obtained in \cite{A-S} in the Euclidean case.

When $M$ is two-dimensional with curvature bounded from below by a positive constant $\kappa,$ we prove (Theorem \ref{thm:nonexistence}) that no stable CMC surface with free boundary can connect 
the boundary components of a slab $M\times[0,l]$ whose width satisfies $l \geq  4\pi /\sqrt{3\kappa}.$ 

In section \ref{sec:warped}, we consider  stable capillary surfaces in warped products of the type $[0,l]\times_f M$ where $M=\r^2, \h^2$ or $\s^2.$ We prove they have to be rotationally invariant when they have genus zero, extending a result obtained in \cite{A-S} in the Euclidean case. 
This applies, in particular, to the region bounded by two parallel horospheres in the hyperbolic space $\h^3$ (Corollary \ref{cor:horospheres} ).

Finally, in Section \ref{sec:closed}, we  obtain a result of the same type as Theorem \ref{thm:nonexistence} for closed stable CMC surfaces in the product $M\times\s^1(r)$ of an orientable surface $M,$ with curvature bounded from below by a positive constant $\kappa,$ by the circle $\s^1(r)$ of radius $r>0.$ More precisely, we show that if $r\geq4/\sqrt{3\kappa},$  then the immersion lifts to a stable CMC immersion in $M\times\r$ (Theorem \ref{thm:closed}). 


\section{Preliminaries }\label{sec:preliminaries}

Let $W$ be an oriented smooth Riemannian manifold of dimension $n\geq 3$ and $\mathcal B\subset W$ a closed domain with smooth boundary. A capillary hypersurface in $\mathcal B$ is a compact and constant mean curvature (CMC)  hypersurface, with non-empty boundary,  which  is contained in $\mathcal B$  and  meets $\partial\mathcal B$ at a constant angle along its boundary.  When the angle of contact  is $\pi/2$, that is, when the hypersurface  is orthogonal to $\partial \mathcal B$, the hypersurface is said to be a CMC hypersurface  with free boundary in $\mathcal B$. 
Capillary hypersurfaces are critical points of an energy functional for volume-preserving variations. In the free boundary case the energy functional is the area functional. We refer 
to \cite{A-S} for more details.

Consider a  capillary hypersurface defined by a smooth immersion $\psi: \Sigma\longrightarrow \mathcal B$ and  let $N$ be a global  unit normal to $\Sigma$ along $\psi$ chosen so that its (constant) mean curvature satisfies $H\geq 0.$ Denote by $\overline N$ the exterior unit normal to $\partial\mathcal B.$ The angle of contact is 
the angle $\theta \in (0,\pi)$  between $N$ and $\overline N.$ We denote by $\nu$ the exterior unit normal to $\partial\Sigma$ in $\Sigma$ and by $\bar\nu$ the unit normal to $\partial\Sigma$ in $\partial\mathcal B$ so that 
$\{N,\nu\}$ and $\{\overline N, \bar\nu\}$ have the same orientation in $(T\partial\Sigma)^{\perp}.$ The angle between  $\nu$ and $\bar\nu$ is also equal to $\theta.$

 The index form $\mathcal I$ of $\psi$ is the symmetric bilinear form defined on 
 the first Sobolev space $H^1(\Sigma)$  of $\Sigma$   by
\[ \mathcal I (f,g)=\int_{\Sigma}\left( \langle \nabla f, \nabla g\rangle -(|\sigma|^2+\text{Ric}(N)) fg\right) d\Sigma-
 \int_{\partial\Sigma}q\, fg\,d(\partial\Sigma),
\]
where $\nabla$  stands for the gradient for the metric induced by $\psi,$  $\sigma$ is the second fundamental form of $\psi,$ Ric$(N)$ is the Ricci curvature of $W$ in the direction $N$ and  
\[ q= \frac{1}{\sin\theta} \text{II} (\bar\nu,\bar\nu)+ \cot\theta \,\sigma(\nu,\nu). 
\]
Here II denotes the second fundamental form of $\partial\mathcal B$ associated to the unit normal $-\bar N.$  

  Let $\mathcal F=\{ f\in H^1(\Sigma), \int_{\Sigma} f\,d\Sigma =0\}.$ The immersion  $\psi$ is capillarily stable if  $\mathcal I(f,f)\geq 0$ for all $f\in\mathcal F.$ 

The stability operator on $\Sigma$ is the linear operator defined on $\mathcal C^2(\Sigma)$ by 
$$ L= \Delta +|\sigma|^2+\text{Ric}(N)$$
where $\Delta$ is the Laplacian on $\Sigma$ induced by $\psi.$

A function $f\in \mathcal F$ is said to be a Jacobi function of $\psi$ if it lies in the kernel of $\mathcal I$, that is, if 
$\mathcal I(f,g)=0$ for all $g\in \mathcal F.$ By standard arguments, this is equivalent to saying that $f\in \mathcal C^{\infty}(\Sigma)$ and satisfies
\begin{align*}
L f &= \text{constant \quad on }\, \Sigma\\
\frac{\partial f}{\partial \nu} &=q\,f\quad\,\, \quad\quad\text{on}\,\, \partial \Sigma
\end{align*}

We will also need to consider compact immersed CMC hypersurfaces without boundary. Given such an immersion $\psi:\Sigma\longrightarrow W$ in an orientable Riemannian manifold $W,$ it is said to be stable if $\mathcal I (f,f)\geq 0$ for all $f\in \mathcal F= \{f\in H^1(\Sigma),\, \int_{\Sigma} f\,d\Sigma =0\}$ where $\mathcal I$ is the index form defined on $H^1(\Sigma)$ by 
\[ \mathcal I (f,g)=\int_{\Sigma}\left( \langle \nabla f, \nabla g\rangle -(|\sigma|^2+\text{Ric}(N)) fg\right) d\Sigma.
\]
 An immersion $\psi$ of a closed or capillary CMC is said to be  {\it strongly stable} if $\mathcal I(f,f)\geq 0$ for all $f\in H^1(\Sigma).$ 
 
 It is also interesting to consider, as a more general setting for capillarity, the  case where   the contact  angle is constant along each component of $\partial \Sigma,$ allowing it to vary from one component to the other. All the previous discussion 
 on stability extends to this more general case (cf. \cite{A-S}).


{\section{Stable CMC hypersurfaces  with free boundary in a slab in a product space $M\times\r$}}

Let $M$ be a connected  oriented Riemannian manifold of dimension $n\geq 2.$ We discuss in this section  the case of CMC immersed hypersurfaces with free boundary in slabs $M\times[0,l], l>0,$ of the  product space $M\times\r.$
The function $q$ vanishes identically and the index form takes a simple form:
\[ \mathcal I (f,g)=\int_{\Sigma}\left( \langle \nabla f, \nabla g\rangle -(|\sigma|^2+\text{Ric}(N)) fg\right) 
d\Sigma.\]

The simplest examples of  free boundary CMC hypersurfaces in a slab $M\times [0,l]$ are cylinders over closed CMC hypersurfaces in $M.$ We will discuss  the stability of 
cylinders. For this, we will need a criterion for stability of closed CMC hypersurfaces. It  was established by   Koiso (\cite{koiso}, Theorem 1.3) for compact CMC surfaces  with boundary in $\mathbb R^3$ but the
 result readily extends to  the closed case in general  ambient manifolds.

\begin{theorem}\label{criterion}
Let $\psi:\Sigma\longrightarrow M$ be a  closed  and two-sided immersed  CMC hypersurface  in a  Riemannian manifold  $M$  of dimension $n\geq 2$  .  
Denote by $\lambda_1(\Sigma) $ and $\lambda_2(\Sigma),$ 
respectively, the first and second eigenvalues of the stability  operator $L$ on $\Sigma$. 
The following hold 
\smallskip

(\text{I}) $\lambda_1(\Sigma)\geq 0$ if and only if $\psi$ is strongly stable.

\smallskip

(\text{II})  If $\lambda_1 (\Sigma) <0$ and $\lambda_2(\Sigma)>0,$ then there exists a uniquely determined smooth function $u$ on $\Sigma$ such that $Lu=1$ and $\psi$ is stable if and only if $\int_{\Sigma} u\,d\Sigma \geq 0.$

\smallskip

(\text{III})  If  $\lambda_1 (\Sigma)<0$ and $\lambda_2(\Sigma)=0$
 and  there exists an eigenfunction associated to $\lambda_2(\Sigma)$ satisfying $\int_{\Sigma} g \, dA \neq 0,$ then $\psi$ is unstable.

\smallskip

 (\text{IV})  If  $\lambda_1(\Sigma)<0$ and $\lambda_2(\Sigma)=0$
 and  $\int_{\Sigma} g \, dA =0$ for any eigenfunction $g$ associated to $\lambda_2.$
Denoting by $E$ the eigenspace 
associated to $\lambda_2(\Sigma)=0,$  there exists a uniquely determined smooth function $u\in E^{\perp}$ (the orthogonal to $E$ in the $L^2$ sense) which satisfies $Lu=1.$
Then $\psi$ is stable if and  only if $\int_{\Sigma} u\,  dA\ge 0.$ 

\smallskip
(\text{V})  If $\lambda_2(\Sigma) <0,$ then $\psi$ is unstable. 
\end{theorem}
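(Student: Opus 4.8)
The plan is to diagonalize the index form by the $L^2$-orthonormal eigenbasis $\{\phi_i\}_{i\ge 1}$ of $-L$, with eigenvalues $\lambda_1\le\lambda_2\le\cdots\to+\infty$ normalized so that $L\phi_i=-\lambda_i\phi_i$. Integration by parts on the closed manifold $\Sigma$ gives $\mathcal I(f,g)=-\int_\Sigma f\,Lg\,d\Sigma$, so that $\mathcal I(f,f)=\sum_i\lambda_i c_i^2$ for $f=\sum_i c_i\phi_i$ and $\lambda_1=\min_{f\neq 0}\mathcal I(f,f)/\|f\|_2^2$; this matches the convention forced by (I). Writing $a_i=\int_\Sigma\phi_i\,d\Sigma=\langle\phi_i,1\rangle$, membership $f\in\mathcal F$ becomes the single linear condition $\sum_i a_i c_i=0$, so everything reduces to deciding when the diagonal form $\sum_i\lambda_i c_i^2$ is nonnegative on that hyperplane. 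I would first record that, by Courant--Fischer, the restriction of $\mathcal I$ to the codimension-one subspace $\mathcal F$ has first constrained eigenvalue $\lambda_1^{\mathcal F}$ obeying the interlacing bounds $\lambda_1\le\lambda_1^{\mathcal F}\le\lambda_2$, and that stability is exactly $\lambda_1^{\mathcal F}\ge 0$.

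The two extreme cases fall out of interlacing. For (I), $\lambda_1\ge 0$ forces all $\lambda_i\ge 0$, hence $\mathcal I(f,f)\ge 0$ on all of $H^1(\Sigma)$, which is strong stability; the converse is the variational formula for $\lambda_1$. For (V), $\lambda_2<0$ yields $\lambda_1^{\mathcal F}\le\lambda_2<0$: concretely, on the two-dimensional span of $\phi_1,\phi_2$ the form $\mathcal I$ is negative definite, while the constraint $\sum_i a_i c_i=0$ cuts out a nonzero line, any point of which is destabilizing.

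The heart of the argument is case (II), where $\lambda_1<0<\lambda_2$, so $0$ is not an eigenvalue, $L$ is invertible, and $u=L^{-1}(1)$ is the unique solution of $Lu=1$; expanding gives $u=-\sum_i(a_i/\lambda_i)\phi_i$ and hence $\int_\Sigma u=-\sum_i a_i^2/\lambda_i$. Here $\lambda_1$ is simple (otherwise $\lambda_2=\lambda_1<0$), so $\lambda_1 c_1^2$ is the only negative term. When $a_1\ne 0$ I would solve the constraint for $c_1$, substitute, and reduce nonnegativity of $\mathcal I$ on $\mathcal F$ to the inequality $\tfrac{|\lambda_1|}{a_1^2}\bigl(\sum_{i\ge2}a_ic_i\bigr)^2\le\sum_{i\ge2}\lambda_i c_i^2$ for all $(c_i)_{i\ge2}$. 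Since $\lambda_i>0$ for $i\ge2$, the sharp bound $\bigl(\sum_{i\ge2}a_ic_i\bigr)^2\le\bigl(\sum_{i\ge2}a_i^2/\lambda_i\bigr)\bigl(\sum_{i\ge2}\lambda_ic_i^2\bigr)$ is attained (at $c_i\propto a_i/\lambda_i$), so the inequality holds for all $(c_i)$ exactly when $a_1^2/|\lambda_1|\ge\sum_{i\ge2}a_i^2/\lambda_i$, i.e. when $\sum_i a_i^2/\lambda_i\le 0$, i.e. when $\int_\Sigma u\ge 0$. The degenerate subcase $a_1=0$ is immediate: then $\phi_1\in\mathcal F$ is itself destabilizing, while simultaneously $\int_\Sigma u=-\sum_{i\ge2}a_i^2/\lambda_i<0$, so both sides of the asserted equivalence fail. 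This sharp Cauchy--Schwarz step, together with checking that its extremizer is admissible, is the main obstacle; the rest is bookkeeping.

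Finally I would treat the borderline $\lambda_1<0=\lambda_2$ of (III) and (IV) by isolating the kernel $E=\ker L$. In (III) there is $g\in E$ with $\int_\Sigma g\ne 0$; choosing $t$ so that $f=\phi_1+tg$ satisfies $\int_\Sigma f=0$ gives a nonzero $f\in\mathcal F$, and since $Lg=0$ forces $\mathcal I(\phi_1,g)=-\int_\Sigma\phi_1\,Lg\,d\Sigma=0$ and $\mathcal I(g,g)=0$, one obtains $\mathcal I(f,f)=\lambda_1\|\phi_1\|_2^2<0$, proving instability. In (IV) the hypothesis $\int_\Sigma g=0$ for all $g\in E$ says $1\perp\ker L$, so by the self-adjoint Fredholm alternative $Lu=1$ is solvable with a unique solution $u\in E^{\perp}$; because the kernel modes contribute neither to $\mathcal I$ (their $\lambda$ is $0$) nor to the constraint (their $a_i$ is $0$), the problem collapses to exactly the case-(II) analysis on $E^{\perp}$, and stability is again equivalent to $\int_\Sigma u=-\sum_{\lambda_i\ne 0}a_i^2/\lambda_i\ge 0$. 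The only analytic inputs are elliptic regularity (smoothness of Jacobi functions) and the Fredholm theory for the Schr\"odinger-type operator $L$ on a closed manifold, both standard.
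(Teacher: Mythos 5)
Your proposal is correct, but it cannot be ``the same approach as the paper'' for a simple reason: the paper does not prove Theorem \ref{criterion} at all. It attributes the statement to Koiso (\cite{koiso}, Theorem 1.3), proved there for compact CMC surfaces with boundary in $\mathbb{R}^3$, and merely remarks that the argument ``readily extends'' to closed hypersurfaces in general ambient manifolds. So you have supplied a self-contained proof where the paper offers a citation. Your route is also technically different from Koiso's original one. Koiso's argument is structural: since $\mathcal I(f,u)=-\int_\Sigma f\,Lu\,d\Sigma=-\int_\Sigma f\,d\Sigma=0$ for every $f\in\mathcal F$, the solution $u$ of $Lu=1$ is $\mathcal I$-orthogonal to $\mathcal F$, and one compares the Morse index of $\mathcal I$ on $H^1(\Sigma)$ (which equals one when $\lambda_1<0\le\lambda_2$) with its index on the hyperplane $\mathcal F$, the sign of $\mathcal I(u,u)=-\int_\Sigma u\,d\Sigma$ deciding whether the negative direction is absorbed by the complement of $\mathcal F$ or forced into it. You instead diagonalize $\mathcal I$ in the eigenbasis, reduce stability to a one-parameter family of inequalities via the constraint $\sum_i a_ic_i=0$, and resolve it by the sharp Cauchy--Schwarz bound $\bigl(\sum_{i\ge2}a_ic_i\bigr)^2\le\bigl(\sum_{i\ge2}a_i^2/\lambda_i\bigr)\bigl(\sum_{i\ge2}\lambda_ic_i^2\bigr)$; the identity $\int_\Sigma u=-\sum_i a_i^2/\lambda_i$ then makes the equivalence with $\int_\Sigma u\ge0$ transparent, and cases (III)--(V) follow by elementary manipulations in the same basis. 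Your treatment buys explicitness (the borderline and degenerate subcases, e.g.\ $a_1=0$, are handled by inspection of the same formula), at the cost of invoking the spectral expansion of the form on $H^1$; Koiso's index-counting argument is shorter and basis-free but hides the quantitative mechanism. Two points you should make explicit for completeness: in the subcase $a_1=0$ of (II), the strict inequality $\int_\Sigma u<0$ uses that not all $a_i$, $i\ge2$, vanish (otherwise the constant $1$ would be zero in $L^2$); and the admissibility of arbitrary sequences $(c_i)_{i\ge2}$ with $\sum_{i\ge2}\lambda_ic_i^2<\infty$ as $H^1$ test functions needs the remark that $\lambda_i\ge\lambda_2>0$ makes this sum control $\sum_{i\ge2}c_i^2$, hence the full $H^1$ norm. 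Both are one-line fixes.
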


\begin{remark}\label{remark1}
If $\lambda_1(\Sigma)<0$ and $\lambda_2(\Sigma)\geq 0$ and we know beforehand the existence of a smooth function $u$ satisfying $Lu=1,$ then, it is easily checked that (II)  and (IV) can be restated simply by saying that $\psi$ is stable if and only if $\int_{\Sigma} u\, dA\geq 0.$
 \end{remark} 
 
We can now give the following characterization of  stable cylinders.

\begin{theorem}\label{product}
Let $\psi:\Gamma\to M$ be an  immersion with constant mean curvature $H$  of a closed oriented  manifold $\Gamma$ of dimension  $n\geq 1$ 
in an $(n+1)$-dimensional oriented manifold $M$ and let $l>0.$ The map $\overline\psi$  $$\overline\psi:=\psi\times\{\text{id}\}:\Gamma \times[0,l] \longrightarrow M\times[0,l],$$
is an immersion of  constant mean curvature $\frac{(n-1)}{n}H$  with free boundary. Denote by $\lambda_1(\Gamma)$ the first eigenvalue of the stability operator $L$ on $\Gamma.$ The following hold
  
  (i)  If  $\psi$ is unstable, then $\overline{\psi}$ is unstable too.

(ii) If   $\psi$ is  stable, then $\overline{\psi}$ is  stable if and only if 
\begin{equation}\label{cylinder}
\lambda_1(\Gamma) +\left( \frac{\pi}{l}\right)^2\geq 0.
\end{equation}
\end{theorem}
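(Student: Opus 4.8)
The plan is to exploit the product structure to reduce the stability of the cylinder $\overline\Sigma:=\Gamma\times[0,l]$ to a spectral problem on $\Gamma$ coupled with a one-dimensional Neumann problem on $[0,l]$. First I would record the geometric data of $\overline\psi$. The vertical field $\partial_t$ is a principal direction of $\overline\Sigma$ with vanishing principal curvature, so the principal curvatures of $\overline\Sigma$ are those of $\psi$ together with a zero; hence $\overline\psi$ is CMC with free boundary (of the stated multiple of $H$) and $|\sigma_{\overline\Sigma}|^2=|\sigma_\Gamma|^2$ is a function of $x\in\Gamma$ alone. Since the unit normal $N$ of $\overline\Sigma$ is horizontal and the $\mathbb{R}$-factor is flat, $\text{Ric}_{M\times\mathbb{R}}(N)=\text{Ric}_M(N)$. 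Writing $\Delta_{\overline\Sigma}=\Delta_\Gamma+\partial_t^2$ for the product metric, the stability operator therefore splits as $L_{\overline\Sigma}=L_\Gamma+\partial_t^2$, where $L_\Gamma$ is the stability operator of $\psi$. Finally, as already noted in this section $q\equiv0$, and the outward conormal of $\partial\overline\Sigma$ along the slices $t=0,l$ is $\pm\partial_t$; thus the free-boundary (Jacobi) condition $\partial f/\partial\nu=0$ is exactly the Neumann condition $\partial_t f=0$ at $t=0,l$.

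The key step is a Fourier decomposition in the $t$-variable. Expanding $f\in H^1(\overline\Sigma)$ in the complete orthogonal system of Neumann eigenfunctions $\{\cos(m\pi t/l)\}_{m\ge0}$ on $[0,l]$, write $f(x,t)=\sum_{m\ge0}a_m(x)\cos(m\pi t/l)$ with $a_0(x)=\frac1l\int_0^l f(x,t)\,dt$. Orthogonality of the cosines and of their derivatives diagonalizes the index form:
\[
\mathcal I_{\overline\Sigma}(f,f)=l\,\mathcal I_\Gamma(a_0,a_0)+\frac l2\sum_{m\ge1}\Big(\mathcal I_\Gamma(a_m,a_m)+(m\pi/l)^2\,\|a_m\|_{L^2(\Gamma)}^2\Big),
\]
where $\mathcal I_\Gamma(a,a)=\int_\Gamma\big(|\nabla a|^2-(|\sigma_\Gamma|^2+\text{Ric}_M(N))a^2\big)\,d\Gamma$. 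The crucial observation is that the volume constraint $\int_{\overline\Sigma}f=0$ reads $\int_\Gamma a_0=0$ and binds only the zeroth mode: the modes $a_m$ with $m\ge1$ are free, since $\cos(m\pi t/l)$ already has zero average over $[0,l]$.

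From here the two assertions follow. For (i), if $\psi$ is unstable pick a mean-zero $\phi$ on $\Gamma$ with $\mathcal I_\Gamma(\phi,\phi)<0$ and test with $f(x,t)=\phi(x)$ (so $a_0=\phi$ and $a_m=0$ for $m\ge1$); this $f$ is admissible and $\mathcal I_{\overline\Sigma}(f,f)=l\,\mathcal I_\Gamma(\phi,\phi)<0$. For (ii), assume $\psi$ stable. If $\lambda_1(\Gamma)+(\pi/l)^2\ge0$, then the $m=0$ term is $\ge0$ by stability of $\psi$, while for each $m\ge1$ the Rayleigh bound $\mathcal I_\Gamma(a,a)\ge\lambda_1(\Gamma)\,\|a\|_{L^2(\Gamma)}^2$ gives $\mathcal I_\Gamma(a_m,a_m)+(m\pi/l)^2\|a_m\|_{L^2(\Gamma)}^2\ge(\lambda_1(\Gamma)+(\pi/l)^2)\|a_m\|_{L^2(\Gamma)}^2\ge0$; hence $\mathcal I_{\overline\Sigma}(f,f)\ge0$ and $\overline\psi$ is stable. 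Conversely, if $\lambda_1(\Gamma)+(\pi/l)^2<0$, let $\phi_1$ be a first eigenfunction of $-L_\Gamma$ and test with $f(x,t)=\phi_1(x)\cos(\pi t/l)$; it has zero $t$-average, hence is admissible, and $\mathcal I_{\overline\Sigma}(f,f)=\frac l2(\lambda_1(\Gamma)+(\pi/l)^2)\|\phi_1\|_{L^2(\Gamma)}^2<0$, so $\overline\psi$ is unstable.

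The main obstacle is conceptual rather than computational: recognizing that the enclosed-volume constraint couples only to the zeroth Fourier mode, so the higher modes must be controlled by the \emph{unconstrained} bottom eigenvalue $\lambda_1(\Gamma)$ (strong stability) and not by the constrained stability of $\psi$. This is precisely why the criterion features $\lambda_1(\Gamma)$ together with the spectral gap $(\pi/l)^2$ of the Neumann problem, and why the destabilizing variation is the first nonconstant Neumann mode $\cos(\pi t/l)$, whose vanishing average makes it automatically volume-preserving. The only technical points to check are the $H^1$-convergence of the Fourier expansion and the termwise validity of the diagonalization, both standard.
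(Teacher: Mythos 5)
Your proposal is correct, and on part (i) and the instability direction of (ii) it coincides with the paper: both use the trivially extended function $\phi(x)$ for (i), and the function $\phi_1(x)\cos(\pi t/l)$ (first eigenfunction of $L$ on $\Gamma$ times the first Neumann mode, automatically mean-zero) when $\lambda_1(\Gamma)+(\pi/l)^2<0$. The genuine divergence is in the stability direction of (ii). The paper reflects the slab cylinder across $M\times\{l\}$, viewing $\Gamma\times[0,l]$ inside the closed cylinder $\Gamma\times\s^1\left(\frac{l}{\pi}\right)$; it computes the product spectrum $\lambda_m(\Gamma)+n^2\pi^2/l^2$ of the Jacobi operator there and then invokes Koiso's criterion (Theorem \ref{criterion} together with Remark \ref{remark1}) --- in the delicate case $\lambda_1(\Gamma)<0\le\lambda_2(\Gamma)$ this requires the auxiliary solution $u$ of $Lu=1$ on $\Gamma$ and the sign of $\int_\Gamma u$, transferred to the closed cylinder --- and finally carries stability back to the slab by even reflection of test functions. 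You instead diagonalize the index form on the slab directly in the Neumann cosine basis and observe that the volume constraint $\int_{\Gamma\times[0,l]} f=0$ binds only the zeroth mode; the zero mode is then handled by the \emph{constrained} stability of $\psi$, and each higher mode by the \emph{unconstrained} Rayleigh bound $\mathcal I_\Gamma(a_m,a_m)\ge\lambda_1(\Gamma)\|a_m\|_{L^2(\Gamma)}^2$ together with the gap $(m\pi/l)^2\ge(\pi/l)^2$. Your route is more elementary and self-contained: it avoids Koiso's criterion, the solvability of $Lu=1$, and the periodization, and it makes transparent why the criterion mixes constrained stability of the base with the unconstrained bottom eigenvalue $\lambda_1(\Gamma)$. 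What the paper's route buys is reuse of machinery: Theorem \ref{criterion} is stated anyway and used elsewhere, the closed cylinder in $M\times\s^1$ anticipates Section \ref{sec:closed}, and the reflection trick (Neumann modes on $[0,l]$ are exactly the even periodic modes on the circle of circumference $2l$) is of independent interest --- indeed the paper's ``standard argument'' for the product spectrum is the same separation of variables you perform. Two small remarks on your write-up: your coefficient bookkeeping (the factors $l$ and $l/2$) is correct, and the technical step you defer is indeed routine --- since $|\sigma|^2+\text{Ric}(N)$ is bounded on the compact $\Gamma$, the three Parseval identities (for $f$, $\nabla_\Gamma f$ and $\partial_t f$) hold exactly and justify the termwise diagonalization for every $f\in H^1$, with all series absolutely convergent.
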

\begin{proof}
The first statement about $\overline\psi$ is straightforward.

 Denote by $\overline L $  the stability operator of  $\overline \psi$ and by   $\overline{\mathcal I}$    the associated index  form.  
 
$(i)$ Suppose $\psi$ is unstable, then there exists   $u\in H^1(\Gamma)$ such that  $\int_{\Gamma} u=0$ and   ${\mathcal I}(u,u)<0.$ Define $\bar u$  on $\Gamma\times[0,l]$ by $\bar u (p,t)=u(p),$ for all $(p,t)\in \Gamma\times[0,l].$ Then clearly $\bar u\in  H^1(\Gamma\times[0,l]),\quad \int_{\Gamma\times[0,l]} \bar u =0 $ and $\overline {\mathcal I} (\bar u, \bar u)={\mathcal I} (u,u)<0,$
which shows that 
$\overline\psi$  is unstable. 

$(ii)$  Suppose $\psi$ is stable. 

If $\lambda_1(\Gamma) +\left( \frac{\pi}{l}\right)^2<0,$ let $u$ be an eigenfunction of $L$ associated to $\lambda_1(\Gamma).$ Then the function  $v:=u\, \cos{(\pi t/l)} $
on $\Gamma\times [0,l]$ satisfies $\int_{\Gamma\times[0,l]} v=0$ and $\overline{\mathcal I}(v,v)=(\lambda_1(\Gamma) +\left( \frac{\pi}{l}\right)^2) \int_{\Gamma\times[0,l]} v^2<0.$ So $\overline\psi$ is unstable.  

Suppose now that $\lambda_1(\Gamma) +\left( \frac{\pi}{l}\right)^2\geq 0.$
 We will first show that the CMC 
immersion $\widehat\psi:= \psi\times \{\text{id}\}: \Gamma\times\s^1\left(\frac{l}{\pi}\right)\longrightarrow M\times\s^1\left(\frac{l}{\pi}\right)$ is stable, where $\s^1\left(\frac{l}{\pi}\right)$ denotes the circle of radius $ \frac{l}{\pi}.$

Let  Ric and 
 $\widehat{\text{Ric}},$ be the Ricci curvature tensors of $M$ and $M\times \r,$ respectively, and denote by $\sigma,$  and $\widehat\sigma,$ the 
second fundamental forms of $\psi$ and $\widehat\psi,$ respectively.  We note that $|\widehat \sigma|=|\sigma| $ and if $N$ is  a unit field normal to $\Gamma$ along $\psi$, then it is also normal to $\Gamma \times \s^1\left(\frac{l}{\pi}\right)$ along $\widehat \psi$  and $\widehat{\text {Ric}}(N,N)={\text {Ric}}(N,N).$ We denote by $\widehat L $ be the stability operator of  $\widehat \psi$ and let   $\widehat {\mathcal I}$  be  the associated index  form.

 The eigenvalues and eigenfunctions of the eigenvalue  problem  
 $$ f^{\prime\prime} +\mu \, f =0$$
 on $\s^1\left(\frac{l}{\pi}\right)$  are given by  $\mu_n= (n\pi/l)^2,\, n=0,1,2,\dots$ and  $f_n(t)=\cos{\sqrt{\mu_n}} t.$
 
 Let $\lambda_1(\Gamma)<\lambda_2(\Gamma)\leq ....\leq \lambda_k(\Gamma)\leq...$ be the sequence of eigenvalues of the operator $L$ on the closed manifold 
$\Gamma. $ As the function $|\widehat\sigma|^2+\widehat{\text{Ric}}( N, N)=|\sigma|^2+{\text{Ric}}(N,N)$ depends only on the first factor of the product  $\Gamma\times \s^1\left(\frac{l}{\pi}\right)$, a standard argument shows that the eigenvalues of the operator 
$\widehat L$ on $\Gamma\times \s^1\left(\frac{l}{\pi}\right)$ are given by
$\lambda_m(\Gamma)+
n^2\pi^2/l^2,\, m, n\in \n.$ 

In particular, the first and second eigenvalues of $\widehat L$ on $\Gamma\times\s^1\left(\frac{l}{\pi}\right)$ verify: 
\begin{equation}\label{lambda1}
 \lambda_1\left(\Gamma\times\s^1\left(\frac{l}{\pi}\right)\right)= \lambda_1(\Gamma)
 \end{equation}
\begin{equation}\label{lambda2}
\lambda_2\left(\Gamma\times\s^1\left(\frac{l}{\pi}\right)\right)=\min \{ \lambda_1(\Gamma)+ \frac{\pi^2}{l^2}, \lambda_2(\Gamma)\}
\end{equation}

 If $\lambda_1(\Gamma)\geq 0,$ then, by (\ref{lambda1}), $\lambda_1(\Gamma\times\s^1\left(\frac{l}{\pi}\right))\geq 0,$ that is, $\widehat \psi$ is strongly stable. So we assume $\lambda_1(\Gamma)<0.$ In this case we know 
that $\lambda_2(\Gamma) \geq 0$ (Theorem \ref{criterion} (V)). Therefore, by (\ref{lambda2}), 
$\lambda_2\left(\Gamma\times\s^1\left(\frac{l}{\pi}\right)\right)\geq 0.$

Consider then the solution $u\in\mathcal C^{\infty}(\Gamma)$ to 
the equation $Lu=1$ described in Theorem \ref{criterion}, (II) or (IV). 
 Then, the function $\hat u,$ defined on $\Gamma\times\s^1(\frac{l}{\pi})\,$ by $\hat u(p,t)=u(p),$ for $(p,t)\in\Gamma\times\s^1(\frac{l}{\pi}),$  verifies $\widehat{L} \hat u =L u= 1$
and $\int_{\Gamma\times\s^1(\frac{l}{\pi})} \hat u = 2l\int_{\Gamma} u \geq 0.$  Theorem \ref{criterion} (and Remark \ref{remark1})  shows that $\widehat\psi$ is stable.

 We now  show that stability of $\widehat\psi$ implies stability of $\overline\psi.$ 

Let $u\in H^1(\Gamma\times[0,l])$ satisfying $\int_{\Gamma\times[0,l]} u =0.$ We extend $u$
to a function in $H^1(\Gamma\times[0,2l])$ by reflection across $M\times\{l\}$, that is, we set $u(p,t)=u(p,2l-t)$ for all $p\in\Gamma$ and 
$t\in[l,2l].$ Clearly this function gives rise to a function $\hat u \in H^1\left(\Gamma\times\s^1\left(\frac{l}{\pi}\right)\right)$ satisfying 
$\int_{\Gamma\times\s^1(\frac{l}{\pi})}\hat u =0$ and $\mathcal I (u,u)= \frac{1}{2}\widehat{\mathcal I }(\hat u, \hat u)\geq 0.$ Therefore $\overline \psi$ is stable. This completes the proof. 
 \end{proof}

For instance, consider the case of a tube of radius $\rho$ in   ${\M}^n(\kappa)\times [0,l],\,\kappa =-1,0,1,\, n\geq 2,$ where ${\M}^n(-1)=\h^n,{\M}^n (0)=\r^n$ and  ${\M}^n (1)=\s^n.$  The stability operator of the tube writes 
$$L= \Delta +(n-1)(c_{\kappa}(\rho)^{-1} + \kappa)$$
with 
 \begin{align*}
c_{-1}(\rho)&= {\tanh^2 \rho} \\
c_0(\rho)\,\,&= \rho^2\\
c_{1}(\rho)\,\,&={\tan^2 \rho} 
\end{align*}

As round spheres in space forms are stable, we get the following  characterization of stable tubes in $\M^n(\kappa)\times\r,$ the case of tubes in $\r^3$ was treated  by Athanassenas \cite{athanassenas} and Vogel \cite{vogel}.

\begin{corollary}\label {tubes}
A  tube of radius $\rho$ and height $l$ in $\h^n\times [0,l]$ is stable if and only if  $\pi \sinh \rho \geq \sqrt{n-1}\, l.$

A  tube of radius $\rho$ and height $l$ in $\r^n\times [0,l]$ is stable if and only if  $\pi  \rho \geq \sqrt{n-1}\, l.$

A  tube of radius $\rho$ and height $l$ in $\s^n\times [0,l]$ is stable if and only if $\pi \sin \rho \geq \sqrt{n-1} \,l.$
 \end{corollary}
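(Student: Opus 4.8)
The plan is to apply Theorem \ref{product} to the specific case $\Gamma = S^{n-1}(\rho) \subset \M^n(\kappa)$, the geodesic sphere of radius $\rho$, which is a closed CMC hypersurface in the base, and whose associated cylinder $\Gamma \times [0,l]$ is the tube in question. Since round geodesic spheres in space forms are known to be stable, we are always in case (ii) of Theorem \ref{product}, so the tube is stable if and only if the eigenvalue inequality $\lambda_1(\Gamma) + (\pi/l)^2 \geq 0$ holds. Thus the entire task reduces to computing $\lambda_1(\Gamma)$, the first eigenvalue of the stability operator $L = \Delta + |\sigma|^2 + \mathrm{Ric}(N)$ on the geodesic sphere.

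First I would record the geometry of the geodesic sphere $S^{n-1}(\rho)$ in $\M^n(\kappa)$. It is totally umbilic with principal curvatures all equal to the value $k(\rho)$ determined by $\kappa$ (namely $\coth\rho$, $1/\rho$, or $\cot\rho$ for $\kappa = -1, 0, 1$), so $|\sigma|^2 = (n-1)k(\rho)^2$, and $\mathrm{Ric}(N) = (n-1)\kappa$ since $\M^n(\kappa)$ has constant sectional curvature $\kappa$. The paper has already packaged the sum $|\sigma|^2 + \mathrm{Ric}(N)$ as the potential $(n-1)(c_\kappa(\rho)^{-1} + \kappa)$ appearing in the displayed stability operator, so I would verify that $c_\kappa(\rho)^{-1}$ indeed equals $k(\rho)^2$ in each case (e.g. $c_{-1}(\rho)^{-1} = \coth^2\rho = k(\rho)^2$), confirming consistency. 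Since this potential is a constant on the sphere, the stability operator is simply $\Delta$ plus a constant, and hence its first eigenvalue is
\[
\lambda_1(\Gamma) = \lambda_1^{\Delta}(S^{n-1}(\rho)) - (n-1)\bigl(c_\kappa(\rho)^{-1} + \kappa\bigr),
\]
where $\lambda_1^{\Delta}$ is the first nonzero eigenvalue of the Laplacian on the sphere. Here $\lambda_1^{\Delta} = 0$ corresponds to constant functions; but because the constant eigenfunction of the Laplacian gives the strongly stable direction, the relevant comparison uses the bottom eigenvalue $\lambda_1^{\Delta} = 0$ of the full Laplacian spectrum, yielding $\lambda_1(\Gamma) = -(n-1)(c_\kappa(\rho)^{-1} + \kappa)$.

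Then I would substitute this into the stability criterion \eqref{cylinder}. The condition $\lambda_1(\Gamma) + (\pi/l)^2 \geq 0$ becomes $(\pi/l)^2 \geq (n-1)(c_\kappa(\rho)^{-1} + \kappa)$, i.e. $\pi^2 \geq (n-1)\,l^2\,(c_\kappa(\rho)^{-1} + \kappa)$. It remains to simplify $c_\kappa(\rho)^{-1} + \kappa$ in each of the three cases. For $\kappa = 0$ this is $1/\rho^2$, giving $\pi^2\rho^2 \geq (n-1)l^2$, i.e. $\pi\rho \geq \sqrt{n-1}\,l$. For $\kappa = -1$ one has $\coth^2\rho - 1 = 1/\sinh^2\rho$, giving $\pi^2\sinh^2\rho \geq (n-1)l^2$, i.e. $\pi\sinh\rho \geq \sqrt{n-1}\,l$. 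For $\kappa = 1$ one has $\cot^2\rho + 1 = 1/\sin^2\rho$, giving $\pi\sin\rho \geq \sqrt{n-1}\,l$. These are exactly the three stated inequalities.

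The only genuine subtlety, and the step I would be most careful about, is pinning down the correct value of $\lambda_1(\Gamma)$ — specifically identifying which Laplace eigenvalue enters the formula. Because the potential is constant, $L$ and $\Delta$ share eigenfunctions, and the least eigenvalue of $L$ comes from the constant function, so $\lambda_1(\Gamma) = -(n-1)(c_\kappa(\rho)^{-1}+\kappa)$, which is manifestly negative; this matches the hypothesis that we are in the nontrivial branch of Theorem \ref{product}(ii) where stability is not automatic. I would also check that the sign conventions in the definition of $L$ (with $\Delta$ the geometer's negative Laplacian so that $\lambda_1 \leq \lambda_2 \leq \cdots$ increase) are consistent with the eigenvalue computation, but these are bookkeeping matters rather than real obstacles. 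Everything else is a direct trigonometric or hyperbolic identity, so the corollary follows immediately from Theorem \ref{product} once $\lambda_1(\Gamma)$ is correctly identified.
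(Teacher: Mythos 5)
Your proposal is correct and follows essentially the same route as the paper: apply Theorem \ref{product}(ii) to the geodesic sphere (which is stable as a closed CMC hypersurface), note that the potential $(n-1)(c_\kappa(\rho)^{-1}+\kappa)$ is constant so that $\lambda_1(\Gamma)=-(n-1)(c_\kappa(\rho)^{-1}+\kappa)$ is attained by the constant function, and then simplify via the identities $\coth^2\rho-1=1/\sinh^2\rho$ and $\cot^2\rho+1=1/\sin^2\rho$. The only blemish is your phrase calling the constant eigenfunction ``the strongly stable direction''---it is in fact the volume-changing direction responsible for $\lambda_1(\Gamma)<0$, which is precisely why the sphere's stability depends on the volume constraint---but this wording slip does not affect the correctness of the computation.
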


We now prove a general fact about the geometry of stable  free boundary CMC hypersurfaces in a slab in $M\times\r.$ It extends the result proved in \cite{A-S} for the case $M=\r^n$.

\begin{theorem}\label{thm: free boundary} Let $M$ be an $n-$dimensional connected and oriented Riemannian manifold and $\psi:\Sigma\to M\times [0,l]$  a stable free boundary 
immersion connecting  the two boundary components $M\times\{0\}$ and $M\times\{l\}, \, l>0,$ of a slab in the Riemannian product $M\times\r.$ 
 
 Then, either $\psi(\Sigma)$ is a vertical cylinder over a closed immersed  stable CMC in $M$ or   $\psi(\Sigma)$ is  locally a vertical graph and in this case the lowest eigenvalue 
 of the stability operator on $\Sigma$  with Dirichlet boundary condition is equal to 0.
  
  Furthermore, suppose $\psi(\Sigma)$ is not a vertical cylinder. If $M$ is simply connected and   the restriction of $\psi$ to each component of $\partial\Sigma$ is an embedding, then $\psi(\Sigma)$  is a global vertical graph over a domain in $M.$ 
In particular, if $M$ is either the Euclidean space $\r^n,$ the hyperbolic space 
 $\mathbb H^n$ or a
 hemisphere $\mathbb S^n_+,$ then, under the same hypotheses, $\psi(\Sigma)$ is rotationally invariant around an axis $\{p\}\times \r$, for some $p\in M.$ 
 \end{theorem}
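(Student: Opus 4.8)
The plan is to analyze the Jacobi function arising from vertical translations and use stability to extract strong geometric information. The key observation is that the slab $M\times[0,l]$ admits the vertical translation Killing field $\partial_t$, and the free boundary condition (orthogonality to the horizontal slices $M\times\{0\}$ and $M\times\{l\}$) is preserved by this flow. Writing $N$ for the chosen unit normal, the function $u:=\langle N,\partial_t\rangle$ is then a Jacobi function: it satisfies $Lu=0$ (since $\partial_t$ is Killing and $|\sigma|^2+\mathrm{Ric}(N)$ is the relevant potential) and, because the hypersurface meets the slices orthogonally, $u$ vanishes along $\partial\Sigma$, i.e. $u$ satisfies the Dirichlet boundary condition $u|_{\partial\Sigma}=0$. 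Thus $u$ is a Dirichlet eigenfunction of $-L$ with eigenvalue $0$.

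Next I would invoke stability. First I would dispose of the case where $u$ vanishes somewhere in the interior but is not identically zero: if $u$ changes sign, one can build from its nodal domains a test function violating stability, or more directly, since $u$ is a Dirichlet eigenfunction with eigenvalue $0$, if it were not the \emph{first} Dirichlet eigenfunction then $\lambda_1^{\mathrm{Dir}}<0$, and I would show this contradicts stability (a sign-definite first Dirichlet eigenfunction with negative eigenvalue can be combined with a suitable constant adjustment, using that the boundary term $q$ vanishes here, to produce an admissible mean-zero test function with negative index form). This forces either $u\equiv 0$ or $u$ to be a first Dirichlet eigenfunction of eigenvalue $0$, hence sign-definite; up to the choice of normal we may take $u\geq 0$ with $\lambda_1^{\mathrm{Dir}}(\Sigma)=0$. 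The case $u\equiv 0$ means $N\perp\partial_t$ everywhere, so $\partial_t$ is everywhere tangent to $\Sigma$; integrating the vertical flow shows $\Sigma$ is a vertical cylinder over its projection $\Gamma\subset M$, and stability of $\psi$ descends to stability of $\Gamma$ (this is the cylinder alternative, consistent with Theorem \ref{product}). In the remaining case $u>0$ in the interior, which means $\langle N,\partial_t\rangle$ never vanishes away from the boundary, so the tangent plane is never vertical in the interior and $\Sigma$ is \emph{locally a vertical graph}; the eigenvalue statement is exactly $\lambda_1^{\mathrm{Dir}}=0$.

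For the second part, assume $\Sigma$ is not a cylinder, so it is locally a graph with $u>0$ interiorly. Each boundary component lies in a slice $M\times\{0\}$ or $M\times\{l\}$ and, being embedded there, bounds (since $M$ is simply connected) and the graph property propagates: I would argue that the height function $t\circ\psi$ has no interior critical points where the tangent plane is vertical—wait, more precisely, the local graph property together with the boundary being a graph over an embedded hypersurface in $M$ lets me show the projection $\pi\colon\Sigma\to M$ is a local diffeomorphism that is injective, using simple connectivity of $M$ to rule out the projection folding over itself (a monodromy/covering argument). This upgrades the local graph to a \emph{global} vertical graph over a domain $\Omega\subset M$.

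Finally, for $M=\r^n,\h^n,\s^n_+$, I would invoke the Alexandrov reflection (moving planes) technique. Since $\Sigma$ is a compact CMC graph with free boundary on two parallel totally geodesic slices, one reflects across the family of totally geodesic vertical hypersurfaces (vertical planes in $\r^n$, vertical geodesic hyperplanes in $\h^n$ and $\s^n_+$), which are isometries preserving the slab and the slices. The free boundary condition makes the reflection compatible with the boundary, and Alexandrov's argument forces $\Sigma$ to be symmetric under reflection through every vertical hyperplane containing a suitable common axis, yielding rotational invariance about some $\{p\}\times\r$. The main obstacle I anticipate is the global graph step: carefully promoting the local graph property to a global one using embeddedness of the boundary and simple connectivity of $M$, and ensuring the Alexandrov reflection applies in the spherical case where one must restrict to the hemisphere $\s^n_+$ so that the vertical totally geodesic hypersurfaces behave as needed and the reflected copies stay within the ambient region.
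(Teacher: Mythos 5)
Your skeleton is the same as the paper's (the Jacobi function $v=\langle N,\partial_t\rangle$ vanishing on $\partial\Sigma$, the trichotomy cylinder/sign-definite/local graph, a covering-type argument for the global graph, Alexandrov reflection at the end), but the two steps you leave as sketches are exactly where the difficulty sits, and both sketches as written fail. For the sign of $v$: the nodal-domain test functions do \emph{not} violate stability. Setting $v_+=\max\{v,0\}$ and $v_-=\min\{v,0\}$, the Dirichlet condition $v|_{\partial\Sigma}=0$ gives $\mathcal I(v_\pm,v_\pm)=0$, hence every mean-zero combination $\widetilde v=v_++a\,v_-$ has $\mathcal I(\widetilde v,\widetilde v)=0$, which is perfectly consistent with stability. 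The paper's actual argument analyzes this \emph{equality} case: stability forces $\widetilde v$ to be a Jacobi function, hence smooth, and unique continuation then gives a contradiction; moreover the case $a=1$ is genuinely delicate, since there one only knows $v=\partial v/\partial\nu=0$ on $\partial\Sigma$ and must reflect the surface across the slice, extend $v$ by zero to a distributional Jacobi field on the doubled (smooth, by CMC plus orthogonality) hypersurface, invoke elliptic regularity, and only then apply unique continuation. Your fallback claim, that stability directly forces $\lambda_1^{\mathrm{Dir}}\geq 0$, is a Friedlander-type inequality $\lambda_1^{D}\geq\lambda_2^{N}$ which is false on general Riemannian manifolds (it already fails for the Laplacian on a sphere minus a small disk), and your proposed proof by ``constant adjustment'' does not work: for $f=\phi_1-c$ with $c$ the mean value of the first Dirichlet eigenfunction $\phi_1>0$, one has $\mathcal I(f,f)=\mathcal I(\phi_1,\phi_1)-2c\,\mathcal I(\phi_1,1)+c^2\,\mathcal I(1,1)$, where $\mathcal I(\phi_1,1)=\lambda_1^{D}\int_\Sigma\phi_1+\int_{\partial\Sigma}\partial_\nu\phi_1<0$ makes the cross term positive and $\mathcal I(1,1)=-\int_\Sigma\bigl(|\sigma|^2+\mathrm{Ric}(N)\bigr)$ has no definite sign, so negativity of $\mathcal I(f,f)$ cannot be concluded. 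Any correct argument here must use the specific function $v$ together with the equality-case and unique-continuation machinery above.

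For the global graph: the free boundary condition makes $\partial_t$ \emph{tangent} to $\Sigma$ along $\partial\Sigma$ (this is precisely the statement $v=0$ there), so the projection $F=P\circ\psi:\Sigma\to M$ is singular along the boundary; it is not a local diffeomorphism up to $\partial\Sigma$, and it is not onto $M$, so ``local diffeomorphism plus injectivity by monodromy'' is not available as you state it. The paper repairs this in two steps: first, a boundary analysis, using the Hopf boundary point lemma to get $\partial v/\partial\nu<0$ on $\partial\Sigma$ together with the second-order computation $\ddot h(0)=\sigma(\nu,\nu)$, showing that near each component $\Gamma_i$ the surface projects strictly to one side of the embedded separating hypersurface $F(\Gamma_i)$; second, a completion, gluing the complementary domains $D_i\subset M$ to $\Sigma$ along $\Gamma_i$, extending $F$ by the identity on each $D_i$, and verifying that the extended map $\widetilde F:\widetilde\Sigma\to M$ is a proper local homeomorphism, hence a covering map, hence a homeomorphism because $M$ is simply connected; only then does $\psi(\Sigma)$ become a global graph. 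Without this boundary analysis and completion (or an equivalent substitute), your covering argument does not close. The final Alexandrov-reflection step is fine and coincides with the paper's, which cites Wente.
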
 

\begin{proof}
  We start with the following general fact. Let $\psi:\Sigma\to M$ be a CMC immersion of an orientable  manifold $\Sigma$ of dimension $n$ in an orientable Riemannian manifold $M$ of dimension $n+1$ and $N$ a global unit normal to $\Sigma.$ If $X$ a Killing vector field on $M$ then the function $\langle X\circ \psi, N\rangle$ lies in the kernel of the Jacobi operator $L$ 
  of $\Sigma.$ We include a proof of this fact for the reader's convenience. The Jacobi operator is also known to be  the linearized operator  of the mean curvature functional. More precisely,  let  $\psi_t$ be a smooth deformation,    through immersions, of $\psi_0=\psi$ and let $H_t$ be the mean curvature function of $\psi_t,$  then:
\begin{equation}\label{killing}  2{\frac{\partial H_t}{\partial t}}|_{t=0}= L(\langle {\frac{\partial{\psi_t}}{\partial t}}|_{t=0},N\rangle).
\end{equation}

Now, let  $\phi_t$ be the local flow of isometries of $M$ generated by the Killing field $X$  and consider the deformation $\psi_t=\phi_t\circ \psi$ of $\psi.$ Clearly, $H_t=H$ for all $t$ and so the left-hand side of (\ref{killing}) vanishes. Furthermore, $ {\frac{\partial{\psi_t}}{\partial t}}|_{t=0}=X\circ\psi.$ This shows that the function $\langle X\circ \psi, N\rangle$ lies in the kernel of $L.$

   In our case, as the vertical unit vector field 
$\frac{\partial}{\partial t}$ is a Killing field of the product $M\times\r,$  
 the function $v=\langle \frac{\partial}{\partial t},N\rangle$ verifies $Lv=0.$ 
 
If  $v$ is identically zero, then $\psi(\Sigma)$ is a vertical cylinder whose base is an   immersed CMC hypersurface in 
$M$ and which is moreover stable by Theorem  \ref{product}. 

 Assume $v$ is not identically zero. We will show it has a sign in the interior of $\Sigma.$
    Suppose by contradiction  that $v$ changes sign and  consider the functions  $ v_+=\text{max} \{v,0\} $ and
 $v_-= \text{min} \{v,0\}$ which lie in the Sobolev space $H^1(\Sigma).$   Using the fact that $v=0$ on $\partial\Sigma,$ one has
 \begin{align*}
 \mathcal I(v_+,v_+)&= \int_{\Sigma} \{\langle \nabla v_+,\nabla v_+\rangle -(|\sigma|^2+\text{Ric}(N))(v_+)^2\} \,d\Sigma\\
                   &= \int_{\Sigma} \{\langle \nabla v,\nabla v_+\rangle -(|\sigma|^2+\text{Ric}(N)) v v_+\} \,d\Sigma\\
                   &= -\int_{\Sigma} \{( \Delta v)v_+ +(|\sigma|^2+\text{Ric}(N)) v v_+\} \,d\Sigma\\
                   &=0
  \end{align*}
 and similarily  $\mathcal I(v_-,v_-)=0$. 
 As we supposed that $v$ changes sign, there exists $a\in \r$ such that $\int_{\Sigma} (v_+ + a v_-)\, d\Sigma =0.$ So we can use  $\widetilde v:= v_+ + a v_-$ as a test function for stability.  We have
  \begin{equation*}
 \mathcal I(\widetilde v, \widetilde v) =\mathcal  I(v_+,v_+)+2a\mathcal I(v_+,v_-) + a^2 \mathcal I(v_-,v_-)=0.
 \end{equation*}
 Since $\psi$ is stable, we conclude that $\widetilde v$ is a Jacobi function and so is, in particular, smooth.  
 We distinguish two cases:
 
  (i) If $a\neq 1,$ then we conclude that $v_+$ and $v_-$ are smooth and so we can write 
 $Lv=Lv_+ + Lv_-=0$ and 
 $L\widetilde v= Lv_+ + a Lv_-=0.$ It follows that $Lv_+=0$ and so, by the unique continuation principle 
 \cite{Ar},  $v_+$  has to be identically 0, which is a contradiction.
 
 (ii) If $a=1.$ Then $v=\widetilde v$ is a Jacobi function and so satisfies $\frac{\partial{ v}}
 {\partial \nu} = 0$ on $\partial\Sigma.$ Let $p\in \partial\Sigma$ with 
 $\psi(p)\in M\times\{0\}$ and $\Omega$ a neighborhood of $p$ in $\Sigma,$ diffeomorphic to the halfspace $\mathbb R^n_+,$ such that $\psi:\Omega\to M\times[0,l]$ is an embedding. Identify  $S=\psi(\Omega)$ with $\Omega$ and call $\tau$ the reflection in $M\times\mathbb R$ through $M\times\{0\}.$ Then 
 $\overline S:= S\cup \tau(S)$ is a $\mathcal C^2$ hypersurface in $M\times\mathbb R$ with constant mean curvature; $\overline S$  is therefore smooth. We now extend  $v$ to a function $\bar v$ on $\overline S$ by setting $\bar v=0$ on $\tau(S).$ As $\frac{\partial v}{\partial\nu}=0$ on $\partial S,$ we have  $\bar v\in \mathcal C^1(\overline S).$ Furthermore, because $v=\frac{\partial v}{\partial \nu}=0$ on $\partial S,$  one checks that $\bar v$ satisfies the equation $\overline L\bar v=0$ on $\overline S$ in the sense of distributions; $\overline L$ denoting the Jacobi operator on $\overline S.$  By regularity theory for elliptic PDEs, $\bar v$ is smooth. As $\bar v$ vanishes on a non empty subset, the unique continuation principle \cite{Ar} implies that $\bar v \equiv 0$ on $S.$ Again, by the unique continuation principle, one has  $v\equiv 0$ on $\Sigma,$ which is a contradiction.

  We have thus shown that  the function $v$ does not change sign in $\Sigma$. We will assume $v\geq 0,$ the case $v\leq 0$ being similar. The function $v$ satisfies
 \begin{equation*}
\begin{cases}
v&\geq 0,\\
Lv &= 0.
 \end{cases}
\end{equation*}
 As we are assuming $v$ is not identically zero, by the strong maximum principle (Theorem 2.9 in \cite{K}) we know that $v>0$ on the interior of $\Sigma.$ So the interior of $\psi(\Sigma)$ is a local vertical graph. Moreover, since $v$ does not vanish in the interior of $\Sigma,$ it is a first eigenfunction of $L$ and so $0$ is the lowest eigenvalue of $L$
 for the Dirichlet problem on $\Sigma.$ This proves the first statement. 
 
 Let us now  prove the second statement. Assuming $M$ is simply connected and $\psi(\Sigma)$ is not a cylinder, we 
  will show it is globally a vertical graph by analyzing its behaviour near its boundary. 
  
  For the sake of simplicity, we set $M_0=M\times \{0\}$ and $M_1=M\times \{l\}.$ Let $\Gamma_1,\ldots, \Gamma_k$ denote the connected components of $\partial\Sigma.$ By hypothesis, $\psi$ restricted to $\Gamma_i$ is an embedding and so 
 $\psi(\Gamma_i)$ separates  the slice containing it, among $M_0$ and $M_1,$  into two connected components (\cite{ Hirsch}).   
 
Denote by $P: M\times\r \to M$ the  projection on the second factor and set  $F=P\circ \psi.$  Fix $i=1,\ldots,k$  and let $\varphi:\mathcal U\longrightarrow \r$ be a smooth function defined in a neighborhood $\mathcal U\subset M$ of 
$\psi(\Gamma_i)$ such that $\psi(\Gamma_i)=\{\varphi=0\}$ and for each $p\in \Gamma_i, \,\nabla \varphi (\psi(p))=N(p).$ For instance we may take $\varphi$ to be the signed distance function to $\psi(\Gamma_i).$

Let us take  a point 
 $p\in \Gamma_i$, and a curve
  $\gamma:(-\epsilon,0]\to \Sigma$ parametrized by arc-length  with $\gamma(0)=p$ and $\dot\gamma(0)=\nu(p)$. Set  $\overline\gamma(s):=\psi(\gamma(s))=(\alpha(s), t(s))$  where $\alpha =F\circ\gamma:(-\epsilon,0]\to M$ and define
   the real function $h$ on 
  $ (-\epsilon,0]$ by  
  $h(s)=\varphi (\alpha(s)).$
  
  Denoting by $D$ the connection on $M,$ and by $\overline D$ the one on $M\times\r,$ we have
 \begin{align*}
\dot h (0)&=h(0)= 0,\\
 \ddot h(0)&= \nabla^2\varphi(\dot\alpha(0),\dot\alpha(0))+
 \langle \nabla \varphi (\psi(p)), \frac{D\dot\alpha}{ds}(0)\rangle\\
  &= \langle N(p), \frac{\overline D\dot{\overline\gamma}}{ds}(0)\rangle\\
  &=\sigma(\nu(p),\nu(p)).
 \end{align*}
  Note that 
  $$\frac{\partial v}{\partial \nu}=-\sigma(\nu,\nu)\langle\nu,e_{n+1}\rangle=
  \begin{cases}+\sigma(\nu,\nu)\quad {\text{if}}\quad \psi(\Gamma_i)\subset M_0,\\
-\sigma(\nu,\nu)\quad {\text{if}} \quad \psi(\Gamma_i)\subset M_1. 
\end{cases}
$$
By the boundary point maximum principle (see, for instance, Theorem 2.8 in \cite{K}), $\frac{\partial v}{\partial \nu}<0$ on $\partial\Sigma.$
So for $t$ small, $F(\gamma(t))$ lies in the component of $M\setminus F(\Gamma_i)$ which has $N(p)$ 
as outwards (resp. inwards) pointing normal at $F(p)$ if $\psi(\Gamma_i)\subset M_0$ (resp. if $\psi(\Gamma_i)\subset M_1$). It follows that there is a thin strip in the interior of $\Sigma$ around $\Gamma_i$ which projects 
on this component. We call $D_i$ the component of  $M\setminus F(\Gamma_i)$ which does not intersect this projection.

 We let
 $\widetilde\Sigma$ denote the (topological) manifold obtained by attaching, for each $i=1,\dots,k,$  $D_i$ to $\Sigma$ using the diffeomorphism $\psi |_{\Gamma_i}: \Gamma_i\to \partial D_i$  (see, for instance, Theorem 9.29 in \cite{lee} for the details of such a construction).

We define 
$\widetilde F: \widetilde \Sigma \to M$ by
\begin{equation*}
\widetilde F =\begin{cases} F\qquad &{\text{on}}\quad \Sigma\\
\text{identity }\quad &{\text{on}}\quad D_i, i=1,\ldots,k.
\end{cases}
\end{equation*}

Note that $\widetilde F$ is well defined since, by construction of   $\widetilde \Sigma,$ each  $x\in \Gamma_i$ is identified with $\psi(x)\in\partial D_i$. As the interior of $\psi(\Sigma)$ is locally a vertical graph and thanks to the above analysis of the behaviour of $\psi$ near $\partial\Sigma,$ we see that $\widetilde F$ is a local homeomorphism. It is also clear that $\widetilde F$ is a proper map, thus it is a covering map. Therefore
$\widetilde F$ is a global homeomorphism onto $M.$ Consequently, $\psi(\Sigma)$ is a graph over a domain in 
$M$.  In particular, when $M=\r^n, \h^n$ or $\s^n_+$,  Alexandrov's reflection technique shows that $\psi(\Sigma)$ is a
hypersurface of revolution around a vertical axis, see \cite{wente}.
 \end{proof}

\begin{remark} It is clear from the proof that a similar result  is true for stable CMC hypersurfaces 
with free boundary in a half-space $M\times [0,+\infty).$ 
\end{remark}

It is not difficult to deduce from Theorem \ref{product} that if the Ricci curvature of $M$ is bounded from below by a positive constant $\kappa,$ then there is no stable cylinder in $M\times[0,l]$ for $l>\pi/\sqrt \kappa$ (see the argument below). When $M$ has dimension 2, we have the following stronger  statement.

\begin{theorem}\label{thm:nonexistence}
Let  $M$ be an orientable Riemannian surface with Gaussian curvature $K\geq \kappa>0.$  If 
\begin{equation*}
l\geq \frac{4\pi}{\sqrt {3\kappa}}
\end{equation*}
then there is no immersed stable free boundary CMC surface in $M\times \r$ connecting $M\times \{0\}$ to 
$M\times \{l\}.$
\end{theorem}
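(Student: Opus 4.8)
The plan is to use Theorem~\ref{thm: free boundary} to split into two cases, dispose of the cylinder case by an eigenvalue comparison, and reduce the non-cylinder case to a one–dimensional weighted eigenvalue problem along the height.

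First suppose $\psi(\Sigma)$ is a vertical cylinder over a closed curve $\Gamma\subset M$ of constant geodesic curvature, which is stable by Theorem~\ref{product}. On such a curve the stability operator is $L=\frac{d^2}{ds^2}+(H^2+K)$ in the arclength parameter $s$, so testing the Rayleigh quotient with the constant function and using $K\geq\kappa$ gives
\[
\lambda_1(\Gamma)\leq -H^2-\frac{1}{|\Gamma|}\int_\Gamma K\,ds\leq-\kappa .
\]
By Theorem~\ref{product} the cylinder can be stable only if $\lambda_1(\Gamma)+(\pi/l)^2\geq0$, which forces $(\pi/l)^2\geq\kappa$, i.e. $l\leq\pi/\sqrt\kappa$. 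Since $4\pi/\sqrt{3\kappa}>\pi/\sqrt\kappa$, this contradicts the hypothesis and eliminates the cylinder case.

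So assume $\psi(\Sigma)$ is not a cylinder. By Theorem~\ref{thm: free boundary} the angle function $v=\langle\partial/\partial t,N\rangle$ satisfies $v>0$ in the interior and $v=0$ on $\partial\Sigma$. Let $h=t\circ\psi:\Sigma\to[0,l]$ be the height, so that $|\nabla h|^2=1-v^2$, and take test functions of the form $f=\phi(h)$ with $\phi:[0,l]\to\r$. Since $q\equiv0$ and $\mathrm{Ric}(N)=K(1-v^2)\geq\kappa(1-v^2)$ while $|\sigma|^2\geq0$, one has
\[
\mathcal I(\phi(h),\phi(h))\leq\int_\Sigma(1-v^2)\,\phi'(h)^2\,d\Sigma-\kappa\int_\Sigma(1-v^2)\,\phi(h)^2\,d\Sigma .
\]
Slicing by $h$ and setting $\beta(s)=\int_{\{h=s\}}\sqrt{1-v^2}\,d\ell$ and $\alpha(s)=\int_{\{h=s\}}(1-v^2)^{-1/2}\,d\ell$, the coarea formula turns both integrals on the right into $\int_0^l\beta\,\phi'^2$ and $\int_0^l\beta\,\phi^2$, while the admissibility condition $\int_\Sigma\phi(h)\,d\Sigma=0$ becomes $\int_0^l\alpha\,\phi=0$. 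Thus stability implies $\int_0^l\beta\,\phi'^2\geq\kappa\int_0^l\beta\,\phi^2$ for every $\phi$ with $\int_0^l\alpha\,\phi=0$. To contradict this it suffices to exhibit a two–dimensional space of functions on which $\int_0^l\beta(\phi'^2-\kappa\phi^2)<0$: its intersection with the single linear constraint $\int_0^l\alpha\phi=0$ is nontrivial and yields an admissible violator. Equivalently, it suffices to show that the first nonzero Neumann eigenvalue $\mu_1$ of the weighted operator $-\beta^{-1}(\beta\,\phi')'$ on $[0,l]$ is strictly less than $\kappa$.

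The main obstacle is precisely this spectral estimate, because $\mu_1$ is governed by the profile of the weight $\beta(s)$ — the weighted length of the level curves $\{h=s\}$ — which a priori could concentrate, so the curvature hypothesis must be fed in through the geometry of these level sets. I would compute the first two $s$–derivatives of $\beta$ (a weighted first–variation-of-length calculation in which the geodesic curvature of the level curves and the Gauss equation for $\Sigma\subset M\times\r$ introduce $K\geq\kappa$) and expect to obtain a Jacobi-type differential inequality forcing $\beta$, hence the slab itself, to close up before $s$ reaches $4\pi/\sqrt{3\kappa}$, in the spirit of a Bonnet–Myers argument. The sharp constant $\tfrac{4}{\sqrt3}$, equivalently the threshold $(\pi/l)^2\leq\tfrac{3}{16}\kappa$, should come out of optimizing the test function against the extremal weight profile permitted by this inequality; obtaining the exact value rather than merely some bound $l\gtrsim1/\sqrt\kappa$ is the crux. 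I also note that reflecting $\Sigma$ across the two slices produces a closed stable CMC surface in $M\times\s^1(l/\pi)$, exactly as in the proof of Theorem~\ref{product}; at the threshold the radius $l/\pi$ equals $4/\sqrt{3\kappa}$, which both explains the form of the constant and suggests carrying out the weight estimate on the boundaryless doubled surface.
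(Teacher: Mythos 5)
Your cylinder case is correct and coincides with the paper's argument: testing with the constant function gives $\lambda_1(\Gamma)\leq-\kappa$, and Theorem \ref{product} then rules out stability as soon as $l>\pi/\sqrt{\kappa}$, which is weaker than the stated threshold. The genuine gap is the non-cylinder case: what you present there is a plan, not a proof, and you say so yourself (``the main obstacle is precisely this spectral estimate \dots is the crux''). After your coarea reduction, stability only yields $\int_0^l\beta\,\phi'^2\geq\kappa\int_0^l\beta\,\phi^2$ for all $\phi$ with $\int_0^l\alpha\,\phi=0$; to contradict this you need the first nonzero Neumann eigenvalue of $-\beta^{-1}(\beta\,\phi')'$ on $[0,l]$ to lie below $\kappa$, and with no control on the weight this is simply false: a weight such as $\beta(s)=e^{Ms}$ has first nonzero Neumann eigenvalue at least $M^2/4$, hence arbitrarily large. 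So everything hinges on the conjectured differential inequality for $\beta$, which is never established; moreover, since your reduction discards $|\sigma|^2$ and $H^2$ entirely and weights the curvature term by $(1-v^2)$, there is no visible mechanism that could produce the factor $3$ in $4\pi/\sqrt{3\kappa}$. Your closing remark about doubling into $M\times\s^1(l/\pi)$ also goes the wrong way: the proof of Theorem \ref{product} deduces stability in the slab \emph{from} stability of the closed surface over the circle, not conversely, so you cannot conclude that the doubled surface is stable.

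The paper closes the non-cylinder case by a different and much shorter mechanism, which your setup never exploits. By Theorem \ref{thm: free boundary}, $v=\langle\partial/\partial t,N\rangle$ is a positive Jacobi function vanishing on $\partial\Sigma$, so $0$ is the lowest \emph{Dirichlet} eigenvalue of $L$ on $\Sigma$; in other words $\Sigma$ satisfies the unconstrained stability inequality for all functions vanishing on the boundary, not merely the volume-constrained one. Since $3H^2+S=3H^2+K\geq\kappa$ on $M\times\r$, Rosenberg's intrinsic distance estimate (Theorem 1 in \cite{rosenberg}) applies and gives $d_{\Sigma}(x,\partial\Sigma)\leq 2\pi/\sqrt{3\kappa}$ for every $x\in\Sigma$; choosing $x$ with $\psi(x)\in M\times\{l/2\}$ yields $l/2\leq 2\pi/\sqrt{3\kappa}$, and in the equality case $d_{\Sigma}(x,\partial\Sigma)=l/2$ forces a vertical geodesic segment to lie in $\Sigma$, hence forces $\psi(\Sigma)$ to be a cylinder, which the first case has already excluded. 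This distance estimate is exactly where the constant $4\pi/\sqrt{3\kappa}$ comes from; to salvage your approach you would either have to reprove such an estimate (essentially redoing \cite{rosenberg}) or prove the Jacobi-type inequality on $\beta$ that you only sketch, and neither is done.
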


\begin{proof}

Let $\psi:\Sigma\to M\times[0,l]$ be a stable CMC immersion with free boundary. Suppose $\psi(\Sigma)$ is not a cylinder. By Theorem \ref{thm: free boundary}, we know  that 0 is the lowest eigenvalue of the stability operator $L$ on $\Sigma$ with Dirichlet boundary condition.   Denote by $H$ the mean curvature of $\Sigma$ and by $S$ the scalar curvature function of $M\times \r.$ We have 
$$ 3H^2+S(p,t) =3H^2+K(p)\geq \kappa,\quad \text{ for} \, (p,t)\in M\times\r.$$
We can therefore apply Theorem 1 in  \cite{rosenberg}  which gives an upper  bound for the intrinsic distance in $\Sigma$ to $\partial\Sigma$.  More precisely, for each $x\in \Sigma,$ 
$$ {d_{\Sigma}}(x,\partial\Sigma)\leq \frac{2\pi}{\sqrt {3 \kappa}}.$$

If $\psi:\Sigma\to M\times [0,l]$ connects   $M\times \{0\}$ to 
$M\times \{l\},$ take $x\in \Sigma$ such that $\psi(x)\in M\times \{\frac{l}{2}\}.$ Then
$$\frac{l}{2}\leq {d_{\Sigma}}(x,\partial\Sigma)\leq \frac{2\pi}{\sqrt {3 \kappa}}.$$

Moreover, if we have the equality  $l/2=2\pi/ \sqrt{3\kappa},$ then ${d_{\Sigma}}(x,\partial\Sigma)=l/2$
and so at least one of the geodesic segments $\{x\}\times [0,l/2]$ or $\{x\}\times [l/2,l]$ is contained in $\Sigma.$ In this case, by Theorem \ref{thm: free boundary}, $\psi(\Sigma)$ has to be a cylinder. 

To conclude we need to check that cylinders over closed constant geodesic curves are unstable for $l\geq 4\pi/{\sqrt{3\kappa}}.$ In fact a stronger statement is true.  Consider  a  curve $\gamma$ of constant geodesic  curvature  $h$ in $M.$ The lowest eigenvalue $\lambda_1(\gamma)$ of the stability operator
on $\gamma$ satisfies
$$\lambda_1(\gamma)\leq \frac{\mathcal I(1,1)}{\int_{\gamma} ds} =-\frac{\int_{\gamma} (h^2+K)ds}{\int_{\gamma}ds} \leq -\kappa.$$
For $l>\pi/{\sqrt \kappa}$,  we get 
$$\lambda_1(\gamma)+\frac{\pi^2}{l^2} \leq-\kappa + \frac{\pi^2}{l^2}<0$$
and so, according to Theorem  \ref{product}, the cylinder is unstable. This completes the proof. 
\end{proof}


\section{Stable capillary surfaces of genus zero in certain warped products}\label{sec:warped}

In this section we are interested in stable capillary surfaces in   warped product spaces  of the type  $[0,l]\times_f M$ where $M=\r^2, \s^2$ or $\h^2$ and $f:[0,l]\to \r$ is a smooth positive function. More precisely, $[0,l]\times_f M$ is  the product 
space $[0,l]\times M$ endowed with the metric (with some abuse of notation)
$$g=dt^2 + f(t)^2 g_0,$$ 
 $g_0$ being the metric on $M.$ 

An important feature of these spaces we will use  is that if $\varphi: M\longrightarrow M$ is an isometry, then its trivial extension $\{\text{id}\} \times\varphi : [0,l]\times_f M\longrightarrow
[0,l]\times_f M$  is an isometry too.

We have the following extension to these spaces of a theorem proved in the Euclidean case in \cite{A-S}.

 
\begin{theorem}\label{thm: warped} 
Let $\psi:\Sigma\to [0,l]\times_f M$ be an immersed capillary surface of genus zero connecting the two boundary components of $[0,l]\times_f M$ where $M=\r^2,\h^2,$ or $\s^2$ and having constant contact angles $\theta_0$ and $\theta_1$ with $\{0\}\times M$ and $\{l\}\times M,$ respectively.

If $\psi$ is stable then $\psi(\Sigma)$ is a surface of revolution around an axis $\r\times \{x\},\, x\in M$. 
\end{theorem}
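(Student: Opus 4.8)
The plan is to run a Killing--field argument parallel to the one used for Theorem \ref{thm: free boundary}, but now using the \emph{horizontal} Killing fields of the base, since the vertical field $\partial/\partial t$ is no longer Killing in a warped product. For each Killing field $Y$ on $M$, the feature recalled just before the statement shows that its trivial extension $X_Y=\{\mathrm{id}\}\times Y$ is a Killing field of $[0,l]\times_f M$. Since $X_Y$ has no $\partial/\partial t$ component it is tangent to both boundary slices and its flow consists of isometries preserving the slab. Consequently the variation $\psi_s=(\text{flow of }X_Y)\circ\psi$ preserves the capillary boundary condition and the enclosed volume, so, by the computation following equation \eqref{killing}, the function $u_Y:=\langle X_Y\circ\psi,N\rangle$ satisfies $Lu_Y=0$ and $\partial u_Y/\partial\nu=q\,u_Y$; moreover $\int_\Sigma u_Y\,d\Sigma=0$ (it is the first variation of the conserved enclosed volume), so $u_Y\in\mathcal F$ is a Jacobi function and a direct integration by parts gives $\mathcal I(u_Y,u_Y)=0$, i.e. $u_Y$ lies in the kernel of $\mathcal I|_{\mathcal F}$.

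First I would record the two easy ends of the argument. Write $\mathfrak g$ for the three--dimensional Lie algebra of Killing fields of $M$ --- it is $\mathfrak{so}(3)$, the Euclidean algebra, or $\mathfrak{so}(2,1)$ according as $M=\s^2,\r^2,\h^2$ --- and note that $Y\mapsto u_Y$ is linear. I claim it suffices to show this map has nontrivial kernel. Indeed, if $u_{Y_0}\equiv 0$ for some $Y_0\neq 0$, then $X_{Y_0}$ is everywhere tangent to $\psi(\Sigma)$, so $\psi(\Sigma)$ is invariant under the one--parameter isometry group generated by $X_{Y_0}$. Because $\Sigma$ is compact, every orbit in $\psi(\Sigma)$ is bounded, hence so is its projection to $M$; this excludes $Y_0$ being a nonzero translation (when $M=\r^2$) or a parabolic or hyperbolic element (when $M=\h^2$), whose orbits in $M$ are unbounded and whose fixed points lie at infinity. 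Therefore $Y_0$ is necessarily elliptic, i.e. a rotation fixing some $x\in M$ (when $M=\s^2$ every nonzero Killing field is already such a rotation), and $\psi(\Sigma)$ is then a surface of revolution about the axis $\r\times\{x\}$.

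It remains to prove that $Y\mapsto u_Y$ is \emph{not} injective, equivalently that $u_1,u_2,u_3$ are linearly dependent for a basis $Y_1,Y_2,Y_3$ of $\mathfrak g$; this is where stability and the genus hypothesis enter and is the main obstacle. Stability gives $\mathcal I\ge 0$ on the hyperplane $\mathcal F$, hence $\mathcal I$ has index at most one on $H^1(\Sigma)$; reformulating through the Robin eigenvalue problem $-L\phi=\mu\phi,\ \partial_\nu\phi=q\,\phi$, this says $\mu_2\ge 0$, while each $u_i$ is an eigenfunction with $\mu=0$. If $\mu_1=0$ the first eigenvalue is simple and the $u_i$ span a space of dimension at most one, so dependence is automatic; the delicate case is $\mu_1<0=\mu_2$, where all $u_i$ lie in the second eigenspace. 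To force dependence there I would exploit the pointwise relation among the ambient Killing fields: at each point the three vectors $X_{Y_1},X_{Y_2},X_{Y_3}$ lie in the two--dimensional tangent space of the slice, hence satisfy one linear relation with variable coefficients. For $M=\s^2$ this is $\sum_i w_i\,X_{Y_i}=0$, with $w\colon\Sigma\to\s^2\subset\r^3$ the composition of $\psi$ with the projection to the $M$ factor, giving the pointwise identity $\sum_i w_i\,u_i\equiv 0$.

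The genus--zero hypothesis should then upgrade such a variable--coefficient relation to a \emph{constant}--coefficient one. Here I would use the conformal structure of the genus--zero surface $\Sigma$ together with a Hersch--type balancing of the projection to $M$, combined with the constraints $u_i\in\mathcal F$ and the fact that on a genus--zero surface each $\mu_2$--eigenfunction has exactly two nodal domains, to show that three independent Jacobi functions of this type cannot coexist. Carrying out this balancing is the technical crux: one must control the boundary integrals $\int_{\partial\Sigma}q(\cdot)^2$ and the failure of the projection to be conformal, and it is precisely at this point that the assumptions that $M$ has constant curvature and that $\Sigma$ has genus zero are used. Once linear dependence is established, the relation cannot involve only translations or parabolic/hyperbolic generators, since the corresponding $u_Y$ are nonzero by the compactness argument of the second paragraph; hence it produces a rotational $Y_0$ with $u_{Y_0}\equiv 0$, and the conclusion follows.
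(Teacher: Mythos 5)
Your setup is correct as far as it goes: the trivial extensions of Killing fields of $M$ are Killing fields of the warped product tangent to both boundary slices, the induced normal components $u_Y=\langle X_Y\circ\psi,N\rangle$ are Jacobi functions satisfying the Robin condition with $\int_\Sigma u_Y\,d\Sigma=0$, and your reduction is sound (if $u_{Y_0}\equiv 0$ for some nonzero $Y_0$, compactness of $\Sigma$ rules out translations and parabolic/hyperbolic elements, so $Y_0$ is a rotation and $\psi(\Sigma)$ is rotational). But the proof has a genuine gap exactly where you say the "technical crux" lies: you never prove that the linear map $Y\mapsto u_Y$ has nontrivial kernel. In the only hard case, $\mu_1<0=\mu_2$, all three functions $u_1,u_2,u_3$ sit in the $0$-eigenspace of the Robin problem, and nothing you invoke forces them to be dependent. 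The multiplicity of the second eigenvalue of a Schr\"odinger-type operator on a genus-zero surface can be as large as $3$, so no abstract multiplicity bound can exclude three independent $0$-eigenfunctions; the pointwise relation $\sum_i w_i u_i\equiv 0$ has \emph{variable} coefficients and there is no mechanism in your sketch converting it into a constant-coefficient relation; and Hersch-type balancing is not available here, since that trick consists in precomposing chosen test functions with a conformal automorphism, whereas your $u_i$ are fixed geometric Jacobi functions that you are not free to renormalize or reparametrize. So the central claim remains an unproved (and, as stated, unsupported) assertion.

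For comparison, the paper avoids this linear-algebra problem entirely by choosing \emph{one} specific rotation in advance: take a boundary component $\gamma$ with $\psi(\gamma)\subset\{0\}\times M$, let $\mathcal C$ be a circumscribed circle of $\psi(\gamma)$ in $M$ (handled separately for $\r^2$, $\h^2$, $\s^2$), and let $u$ be the Jacobi function induced by rotation about the vertical axis through the center $x$ of $\mathcal C$. The circumscribed circle touches $\psi(\gamma)$ in at least two points, and a point of $\gamma$ closest to $x$ gives a third; all three are critical points of the distance to $x$ along $\gamma$, hence zeros of $u$. At boundary zeros the Robin condition forces $\partial u/\partial\nu=0$, so by the Hopf boundary point lemma $u$ changes sign near each of them (or vanishes identically, excluded by unique continuation). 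Genus zero plus the Jordan curve theorem then yields at least three nodal domains, and a Courant-type cut-and-paste (a mean-zero combination of $u$ on two nodal domains, which has $\mathcal I=0$, hence is Jacobi by stability, hence vanishes by unique continuation since it is zero on an open set) gives the contradiction, so $u\equiv 0$. If you want to salvage your approach, you would need to supply the geometric input playing the role of the circumscribed circle; as written, your argument establishes the easy implications but not the theorem.
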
 
\begin{proof}
Let  $\gamma$  be a connected component of  $\partial\Sigma$  such that $\psi(\gamma)$  lies on
  $\{ 0\}\times M.$ We claim there is a circle $\mathcal C$ in $M$ bounding a disk containing $\psi(\gamma)$ and touching
$\psi(\gamma)$ at least at 2 points. For $M=\r^2,$ the circumscribed circle about $\psi(\gamma)$ has this property (cf.  \cite{osserman}). When $M=\h^2,$  we can just take the (Euclidean) circumscribed circle about 
  $\psi(\gamma)$ in the half-space model since  Euclidean circles in this model are also
  metric circles for the hyperbolic metric. When $M=\s^2,$
 we pick a point $p\in\s^2\setminus \psi(\gamma)$  and perform stereographic projection $\sigma$ from the point $p$ onto the Euclidean plane $\r^2. $ Let    $\mathcal C^\ast$ be the circumscribed  circle  in $\r^2$
about  $\sigma(\psi(\gamma)).$ Its inverse image $\mathcal C:=\sigma^{-1}(\mathcal C^\ast)$  is a circle in $\s^2$
which has the required property. 

 We will show that $\psi(\Sigma)$ is a surface of revolution around the vertical axis passing through the center $x$ of $\mathcal C.$ In the case of $\s^2,$ we take $x$ to be the center of the disk bounded by $\mathcal C$ that contains $\psi(\gamma).$ 

 Let us  consider the  Jacobi function $u$ on $\Sigma$ induced by 
the rotations around the vertical axis passing through $x.$ The function $u$ verifies
\begin{equation}\label{rotation}
\begin{cases}  Lu = 0 \,\,\,\,\quad {\text {on}}\quad \Sigma\\
\frac{\partial u}{\partial \nu} = q\, u \quad {\text{on}}\quad \partial \Sigma
 \end{cases}
\end{equation}

We will prove  that $u\equiv 0$ on $\Sigma.$ 

Suppose, by contradiction, $u$ is not identically zero. Then its nodal set $u^{-1}(0)$ in the interior of $\Sigma$ has the structure of a graph (cf. \cite{cheng}). We will show that $u$ has at least 3 nodal domains by analyzing the set   $u^{-1}(0)\cap\gamma$;  a nodal domain of $u$ being a connected component of $\Sigma\setminus u^{-1}(0)$.

We first note  that, because of the boundary condition satisfied by $u,$ if $p\in u^{-1}(0)\cap \partial\Sigma$ then
$\frac{\partial u}{\partial \nu}(p)=0.$ It follows from the boundary point maximum principle (see Theorem 2.8 in \cite{K}) that  $u$ changes sign in any neighborhood of $p\in u^{-1}(0)\cap \partial\Sigma$    unless $u$ is identically zero in a neighborhood of $p.$ In the latter case, by the unique continuation principle \cite{Ar}, $u$ would vanish everywhere on $\Sigma,$ contradicting our assumption. Consequently  each such point  
lies on the boundary of at least 2 components of the  set $\{u\neq 0\}.$

If $p\in \partial\Sigma$ is a critical point of the distance function to $x$ restricted to $\gamma$, then one can check that $u(p)=0$.
Now, we observe that by the choice of $\mathcal C,$ there are at least 3 points in  $u^{-1}(0)\cap \gamma.$ Indeed, we already know  that there are two points  in $u^{-1}(0)\cap \gamma.$ A third one is a point of $\gamma$ whose image by $\psi$ is a closest one to the center $x$ 
of $\mathcal C.$ 

 Since by hypothesis $\Sigma$ is topologically a planar domain, using the above information and the Jordan curve theorem, it is easy to see this implies that $u$ has at least 3 nodal domains.

 Denote by $\Sigma_1$ and $\Sigma_2$ two of these components and consider the following function in the Sobolev space $H^1(\Sigma)$:
 \begin{equation*}
\widetilde u ={ \begin{cases}
\quad u \,\,\,\,\,\quad{\text{on}}\quad \Sigma_1\\
\alpha \, u \quad\quad {\text{on}}\quad \Sigma_2\\
\quad 0\qquad {\text{on}}\quad \Sigma\setminus (\Sigma_1\cup \Sigma_2)
 \end{cases}}
\end{equation*} 
 where $\alpha\in \r$ is chosen so that $\int_{\Sigma}\widetilde u\,dA =0.$
 Using (\ref{rotation}) we compute
 \begin{align*}
 \int_{\Sigma_1}\{ \langle \nabla\widetilde u, \nabla\widetilde u\rangle -(|\sigma|^2+\text{Ric}(N)) {\widetilde u}^2 \}dA
 &= \int_{\Sigma_1}\{ \langle  \nabla u,\nabla \widetilde u\rangle -(|\sigma|^2+\text{Ric}(N)) \,u{\widetilde u}\}dA\\
 = -\int_{\Sigma_1} ( \Delta u+ (|\sigma|^2+\text{Ric}(N)) u) \widetilde u \,dA  &+\int_{\partial\Sigma_1} \widetilde u\frac{\partial u}{\partial \nu}\,ds\\
 &= \int_{{\partial \Sigma_1}\cap \partial\Sigma} q {\widetilde u}^2 \,ds
  \end{align*}
  Using a similar computation on $\Sigma_2,$ we deduce that 
  $\mathcal  I(\widetilde u,\widetilde u)=0.$
 As $\Sigma$ is stable, we conclude that  $\widetilde u$ is a Jacobi function. Indeed, the quadratic form on $\mathcal F$ associated to $\mathcal I$ has a minimum at $\widetilde u$ and so $\widetilde u$ lies in the kernel of $\mathcal I.$ However, $\widetilde u$ vanishes on a 
  non empty open set. By the unique continuation principle \cite{Ar}, $\widetilde u$ has to vanish everywhere, which is a contradiction. 
  
  Therefore $u\equiv 0.$ This means that $\psi(\Sigma)$ is a surface of revolution around the axis through $x.$
 \end{proof}

The above result applies to 
the region bounded by two spheres centered at the origin for  metrics on $\r^3$ that are invariant under the group $SO(3).$  This is, for instance, the case for the region bounded by two concentric spheres in 
$\r^3, \h^3$ or $\s^3.$  Another interesting case to which the result applies is  the region bounded by two parallel horospheres  in $\h^3.$ Indeed,  consider the half-space model of $\h^3,$ that is, 
$$\h^3=\{(x_1,x_2,x_3)\in \r^3, x_3 >0\}$$
equipped  with the metric
 $$ds^2=\frac{dx_1^2+dx_2^2+dx_3^2}{x_3^2}.$$  Making the change of variables  $t=\log x_3,$ we see that the metric writes 
$$ds^2= e^{-2t} (dx_1^2+dx_2^2)+dt^2.$$
Otherwise said, $\h^3$ can be viewed as the warped product $\r \times_{e^{-t}} \r^2$  and we may assume, up to a rigid motion, that the two parallel horospheres are slices  $\{x_3= \text{cst}\}.$ We can thus state the following
\medskip
\begin{corollary}\label{cor:horospheres} 
Let $\psi:\Sigma\to \h^3$ be an immersed capillary surface of genus zero connecting two parallel horospheres.  If $\psi$ is stable then $\psi(\Sigma)$ is a surface of revolution around an axis orthogonal to the horospheres. 
\end{corollary}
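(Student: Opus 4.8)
The plan is to recognize the region bounded by the two parallel horospheres as a warped product of the type treated in Theorem \ref{thm: warped} and then invoke that theorem directly. As recorded in the paragraph preceding the statement, in the half-space model the substitution $t=\log x_3$ rewrites the hyperbolic metric as $ds^2=dt^2+e^{-2t}(dx_1^2+dx_2^2)$, exhibiting $\h^3$ as the warped product $\r\times_{e^{-t}}\r^2$, and after a rigid motion we may assume the two horospheres are the slices $\{t=t_0\}$ and $\{t=t_1\}$ with $t_0<t_1$. The first step is to normalize the slab: setting $\tilde t=t-t_0$ and $l=t_1-t_0$, the induced metric on the region between the horospheres becomes $d\tilde t^2+e^{-2t_0}e^{-2\tilde t}(dx_1^2+dx_2^2)$, which is precisely the warped product $[0,l]\times_f\r^2$ with $f(\tilde t)=e^{-t_0}e^{-\tilde t}$, a smooth positive function on $[0,l]$.

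Since this identification is an isometry, it preserves all the relevant structure: the genus-zero hypothesis on $\Sigma$ is unchanged, the two horospheres correspond to the two boundary slices $\{0\}\times\r^2$ and $\{l\}\times\r^2$, the capillary condition is carried to a capillary condition with the same contact angles (constant along each component) on the two slices, and stability of $\psi$ is equivalent to stability of the corresponding immersion into $[0,l]\times_f\r^2$. Thus, taking $M=\r^2$ together with the warping function $f$ above, all hypotheses of Theorem \ref{thm: warped} are satisfied, and the theorem yields that the image is invariant under the rotations of $[0,l]\times_f\r^2$ fixing some axis $\r\times\{x\}$, $x\in\r^2$.

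The one point requiring care is translating this conclusion back into $\h^3$. The axis $\r\times\{x\}$ corresponds, in the half-space model, to the vertical half-line $\{(x_1^0,x_2^0,x_3):x_3>0\}$ through $x=(x_1^0,x_2^0)$, which is a geodesic of $\h^3$ meeting every slice $\{t=\text{const}\}$, hence every horosphere, orthogonally. The rotations fixing $\r\times\{x\}$ in the warped product are exactly the Euclidean rotations about this vertical line, and by the structural feature noted at the beginning of Section \ref{sec:warped} (isometries of $M$ extend trivially to isometries of the warped product) they are isometries of all of $\h^3$. Consequently $\psi(\Sigma)$ is a surface of revolution about this vertical geodesic, which is orthogonal to the horospheres, as claimed. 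There is no genuine analytic difficulty here: the entire content of the corollary lies in the warped-product normalization and in this final geometric identification of the axis of revolution.
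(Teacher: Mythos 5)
Your proposal is correct and follows essentially the same route as the paper: the paper's proof consists precisely of the half-space-model computation $t=\log x_3$ exhibiting $\h^3$ as the warped product $\r\times_{e^{-t}}\r^2$ with the horospheres as slices, followed by an application of Theorem \ref{thm: warped}. Your additional care in normalizing the slab to $[0,l]$ and in identifying the axis $\r\times\{x\}$ with a vertical geodesic orthogonal to the horospheres only makes explicit what the paper leaves implicit.
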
 

 The isoperimetric problem in regions bounded by two parallel horospheres in $\h^3$  was studied in \cite{chaves} by Chaves, da Silva and Pedrosa.


\section{Stable closed CMC  surfaces in a product $M\times\s^1(r)$}\label{sec:closed}
In this section, we consider  stable CMC surfaces in a product $M\times \s^1(r),$ where $M$ is a Riemannian surface with curvature positively bounded from below, $K\geq \kappa,$ with $\kappa >0$ a constant and $\s^1(r)$ is the circle of radius $r>0.$  We have the following result which is in the same vein as Theorem \ref{thm:nonexistence}.

\begin{theorem}\label{thm:closed}
Let $\psi: \Sigma\to M\times \s^1(r)$ be a  stable CMC  immersion of a closed connected and oriented surface $\Sigma$ in $M\times \s^1(r)$ where $M$ is an orientable  surface with Gaussian curvature satisfying $K\geq \kappa>0,$ for some constant $\kappa,$ and $\s^1(r)$ is the circle of radius $r>0.$  Denote by $p:M\times\s^1(r)\to\s^1(r)$ the canonical projection. Suppose that
$$r\geq\frac{4}{\sqrt {3\kappa}},$$
then the induced homomorphism
$$(p\circ \psi)_{\ast} : \pi_1(\Sigma)\to \pi_1(\s^1(r))$$
is trivial and therefore 
 $\psi$ lifts to a stable CMC immersion $\widetilde\psi: \Sigma \to M\times \r.$
\end{theorem}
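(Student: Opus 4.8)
The plan is to argue by contradiction, combining covering space theory with the intrinsic distance estimate of Rosenberg \cite{rosenberg} already invoked in the proof of Theorem \ref{thm:nonexistence}. First I would record the equivalence: $(p\circ\psi)_\ast$ is trivial if and only if $p\circ\psi:\Sigma\to\s^1(r)$ lifts to the universal cover $\r\to\s^1(r)$, which is in turn equivalent to the existence of the desired lift $\widetilde\psi:\Sigma\to M\times\r$. So I assume $(p\circ\psi)_\ast$ is nontrivial and seek a contradiction. Pulling back the covering $\r\to\s^1(r)$ along $p\circ\psi$ and passing to the connected cover associated with $\ker(p\circ\psi)_\ast$ produces a connected Riemannian cover $\pi:\Sigma'\to\Sigma$ together with a CMC immersion $\psi':\Sigma'\to M\times\r$ of the same mean curvature $H$ lifting $\psi$. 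Since $\Sigma$ is closed, $\Sigma'$ is complete; since the cover is infinite cyclic, $\Sigma'$ is noncompact and invariant under a vertical translation $T$ of amplitude $\tau_0\geq 2\pi r$, with $\Sigma'/\langle T\rangle=\Sigma$.

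Next I would exploit the vertical Killing field $\partial/\partial t$ of $M\times\r$: as in the proof of Theorem \ref{thm: free boundary}, the function $v'=\langle\partial/\partial t,N'\rangle$ satisfies $L'v'=0$ on $\Sigma'$ and is $T$-invariant, hence descends to a Jacobi function $v$ on $\Sigma$. This splits the argument into three cases. If $v\equiv 0$ then $\Sigma$ is a vertical cylinder $\Gamma\times\s^1(r)$ over a closed curve $\Gamma$ of constant geodesic curvature in $M$; the eigenvalues of its stability operator are $\lambda_m(\Gamma)+k^2/r^2$, and the bound $\lambda_1(\Gamma)\leq-\kappa$ used in Theorem \ref{thm:nonexistence} gives a second eigenvalue $\leq-\kappa+1/r^2<0$ once $r>1/\sqrt{\kappa}$ (in particular for $r\geq 4/\sqrt{3\kappa}$), so $\Sigma$ is unstable by Theorem \ref{criterion}(V), a contradiction. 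If $v$ has constant sign but is not identically zero, the strong maximum principle forces $v'$ to have constant sign on $\Sigma'$, so $\psi'$ is everywhere transverse to $\partial/\partial t$ and $\psi'(\Sigma')$ is a graph over a domain of $M$; this is impossible since $\Sigma'$ is invariant under the nontrivial vertical translation $T$. Hence $v$ changes sign.

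In the remaining case I would use that $\Sigma$ is weakly stable with a sign-changing Jacobi function $v$: then $v$ is a second eigenfunction of the stability operator (the strong Morse index being at most $1$), so by Courant's nodal domain theorem it has exactly two nodal domains $\Sigma^+,\Sigma^-$. On each, $\pm v>0$ solves $L(\pm v)=0$ with $\pm v=0$ on the nodal set $\{v=0\}$, so $0$ is the first Dirichlet eigenvalue and $\Sigma^\pm$ are strongly stable; the same holds for the components of $\{v'>0\}$ and $\{v'<0\}$ on $\Sigma'$. Since $3H^2+S=3H^2+K\geq\kappa$, Rosenberg's Theorem~1 applies on each such strongly stable piece and yields that every point of $\Sigma'$ lies within intrinsic distance $2\pi/\sqrt{3\kappa}$ of the vertical-tangency locus $\{v'=0\}$. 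As the height $h=t\circ\psi'$ is proper, satisfies $h\circ T=h+\tau_0$ and has $|\nabla^{\Sigma'}h|\leq 1$, intrinsic distance dominates height difference, and I conclude that the set of heights $h(\{v'=0\})$ is $2\pi/\sqrt{3\kappa}$-dense in $\r$.

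Finally I would derive the contradiction from the size of the period. Because $v$ has only two nodal domains, the vertical tangencies are confined to (essentially) two bands per period $\tau_0\geq 2\pi r\geq 8\pi/\sqrt{3\kappa}$, so $h(\{v'=0\})$ should leave a gap of length exceeding $4\pi/\sqrt{3\kappa}$ whenever $r>4/\sqrt{3\kappa}$; the midpoint of such a gap is a point at height-distance, hence intrinsic distance, greater than $2\pi/\sqrt{3\kappa}$ from $\{v'=0\}$, contradicting the previous paragraph. The borderline $r=4/\sqrt{3\kappa}$ is excluded exactly as the equality case in Theorem \ref{thm:nonexistence}. The main obstacle is precisely this last step: turning ``two nodal domains'' into a genuine height-gap requires controlling the nodal set, i.e.\ ruling out that a nodal domain itself winds around the $\s^1(r)$ factor so that its vertical tangencies sweep through all heights. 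Here I would use that $K\geq\kappa>0$ forces $M$, by Bonnet--Myers and orientability, to be diffeomorphic to $\s^2$ and hence simply connected, which should confine the graphical pieces $\Sigma^\pm$ to bounded height ranges on $\Sigma'$ and thus produce the required gap; making this confinement rigorous is the technical heart of the proof.
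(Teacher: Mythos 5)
Your reduction (triviality of $(p\circ\psi)_{\ast}$ $\Leftrightarrow$ existence of the lift), the cyclic-cover construction, and the first two cases of your trichotomy are essentially sound: if $v\equiv 0$ the surface is a vertical cylinder $\Gamma\times\s^1(r)$, and $\lambda_1(\Gamma)+1/r^2\leq-\kappa+1/r^2<0$ contradicts stability via Theorem \ref{criterion}(V); if $v$ never vanishes, the projection of $\psi$ to the $M$ factor is a local diffeomorphism from a closed surface, hence a covering of $M$ (which is diffeomorphic to $\s^2$ by Bonnet--Myers and orientability), so $\Sigma$ is simply connected and the conclusion holds trivially. The proposal collapses in the third case, exactly where you say it does. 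From ``$v$ is a second eigenfunction with two nodal domains'' you need the height image $h(\{v'=0\})$ to leave a gap of length greater than $4\pi/\sqrt{3\kappa}$ in $\r$, but Courant's theorem counts complementary domains and gives no control whatsoever on the nodal set itself: a nodal curve may represent a nontrivial class in the $\s^1(r)$-direction, i.e.\ lift to a properly embedded curve of unbounded height in $\Sigma'$, in which case $h(\{v'=0\})=\r$ and there is no gap at all. Your proposed repair --- $M\simeq\s^2$ simply connected, hence the pieces $\Sigma^{\pm}$ are ``confined to bounded height ranges'' --- does not work either: on a region where $v'\neq 0$ the surface is only a local graph (a multigraph) over $M$, and multigraphs over a compact base can spiral and sweep through every height. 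So the decisive step is missing, and nothing else in the argument substitutes for it. (There are also smaller unaddressed points, e.g.\ applying Rosenberg's estimate to nodal domains whose boundaries need not be smooth, but these are secondary.)

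The paper sidesteps all of this structure theory with one idea your proposal does not contain: \emph{stability bounds the intrinsic diameter of $\Sigma$.} If $x,y$ realize $d=\text{diam}(\Sigma)$, the disks $B_{d/2}(x)$ and $B_{d/2}(y)$ have disjoint interiors, so by min--max $\lambda_2(\Sigma)\leq\max\{\lambda_1(B_{d/2}(x)),\lambda_1(B_{d/2}(y))\}$ (first Dirichlet eigenvalues), while stability forces $\lambda_2(\Sigma)\geq 0$; hence at least one of the two disks has nonnegative first Dirichlet eigenvalue, and Rosenberg's Theorem 1 --- applied to that metric disk, not to nodal domains --- gives $d/2\leq 2\pi/\sqrt{3\kappa}$, i.e.\ $\text{diam}(\Sigma)\leq 4\pi/\sqrt{3\kappa}$. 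The topological part is then one line: if some loop in $\Sigma$ projects to a homotopically nontrivial loop in $\s^1(r)$, its projection cannot stay in an open half-circle, so the loop contains points $x,y$ whose projections are at distance $\pi r$ in $\s^1(r)$; since the projection $p$ and the immersion $\psi$ do not increase distances, $\pi r\leq\text{diam}(\Sigma)\leq 4\pi/\sqrt{3\kappa}$, and the borderline case $r=4/\sqrt{3\kappa}$ is excluded by perturbing the loop away from the point $\psi(y)$ and rerunning the equality analysis. Your instinct that Rosenberg's estimate should be applied to a region with nonnegative first Dirichlet eigenvalue is exactly right; the missing idea is to produce that region by the disjoint-balls/second-eigenvalue trick rather than by the nodal decomposition of a Jacobi field, whose geometry cannot be controlled.
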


\begin{proof}

We first derive an upper bound for the diameter of $\Sigma.$ Denote by $d_{\Sigma}$  the intrinsic distance on $\Sigma$ and by $\text {diam} (\Sigma)$ its diameter and let $x,y\in \Sigma$ be such that $d:=d_{\Sigma}(x,y)=\text {diam} (\Sigma).$ Then the  disks 
$B_{d/2}(x)$ and $B_{d/2}(y)$ of radius $d/2$ centered at $x$ and $y,$ respectively, have disjoint interiors. Denote by $\lambda_1(B_{d/2}(x))$ (resp. $\lambda_1(B_{d/2}(y))$)  the first eigenvalue of the operator $L$ with Dirichlet boundary condition on $B_{d/2}(x)$ (resp. on  $B_{d/2}(y)$).  Since $\psi$ is stable, the second eigenvalue of $L$ on $\Sigma$ verifies $\lambda_2(\Sigma)\geq 0$ (cf. Theorem \ref{criterion}). By the min-max characterization of the eigenvalues of $L$, we have
$$\lambda_2(\Sigma)=
 \inf_{{E\subset H^1(\Sigma),
 \text{dim} E=2}}\,\,
  \sup_{u\in E\setminus
 \{0\}} \frac{-\int_{\Sigma}u Lu\,d\Sigma}{\int_{\Sigma} u^2\, d\Sigma}.
  $$
 Since $B_{d/2}(x)$ and $B_{d/2}(y)$ have disjoint interiors, it follows  that 
$$\lambda_2(\Sigma)\leq \max \{ \lambda_1(B_{d/2}(x)), \lambda_1(B_{d/2}(y))\}.$$
Therefore at least one of the numbers $\lambda_1(B_{d/2}(x))$ and $\lambda_1(B_{d/2}(y))$ is  nonnegative.  Denoting by $H$ the mean curvature of $\Sigma$ and by $S$ the scalar curvature function of $M\times\s^1(r),$ we have
$$ 3H^2+S(p,\theta)=3H^2+K(p)\geq \kappa, \quad  \text{for}\,(p,\theta)\in M\times\s^1(r).$$
We can thus apply Theorem 1 in  \cite{rosenberg} which gives the upper bound 
$d/2\leq 2\pi/\sqrt{3\kappa},$ that is, 
\begin{equation}\label{ineqdiam1}
\text{diam}(\Sigma)\leq \frac{4\pi}{\sqrt{3\kappa}}.
\end{equation}

Suppose now  that the induced homomorphism $\pi_1(\Sigma)\to \pi_1(\s^1(r))$ is not trivial and consider  a  loop  $\gamma:\s^1\to \Sigma$ such that the loop $p\circ \psi\circ \gamma:\s^1\to \s^1(r)$  is nontrivial. We may assume that $\gamma$ is a piecewise $\mathcal C^1$-immersion. Denote by $d_0$ the distance on $M\times\s^1(r),$ 
by $d_{\Sigma}$ the one on $\Sigma$ and by $d_1$ the one on $\s^1(r).$ 
Let $x\in \gamma(\s^1)$, then $p\circ\psi \circ \gamma(\s^1)$
 is not entirely contained  in an interval of radius $<\pi r$ centered at $p(\psi(x))$ in $\s^1(r),$ because otherwise the loop $p\circ\psi\circ\gamma$  would be trivial.  Therefore, there  exists a point $y\in\gamma(\s^1)$ such that  
\begin{equation}\label{ineqdiam2}
\pi r= d_1(p(\psi(x)),p(\psi(y))\leq d_0(\psi(x),\psi(y)\leq d_{\Sigma}(x,y)\leq {\text{diam}}(\Sigma).
\end{equation}
It follows from (\ref{ineqdiam1}) and (\ref{ineqdiam2}) that $r\leq 4/\sqrt{3\kappa}$.

 Moreover, if $r= 4/\sqrt{3\kappa},$ then 
all the inequalities  in  (\ref{ineqdiam2}) are equalities. In particular $\pi r=d_1(p(\psi(x)),p(\psi (y)))= d_0(\psi(x),\psi(y)).$ It follows that if $\psi(x)=(x_0,\theta)\in M\times \s^1(r)$ then, we necessarily have $ \psi(y)=(x_0,\theta^\ast),$ where $\theta^\ast \in \s^1(r)$ is the antipodal point of $\theta.$  Now, we can deform slightly the loop $\gamma$ to a 
homotopic one $\bar\gamma$ satisfying $x\in \bar\gamma(\s^1)$ and $\psi(y)\notin \psi(\bar\gamma(\s^1)).$ By the same argument as before, there must exist a point $\bar y\in \bar\gamma(\s^1)$ satisfying $\pi r=d_1(p(\psi(x)),p(\psi(\bar y)))= d_0(\psi(x),\psi(\bar y)).$ Again this implies that
$\psi(\bar y)=(x_0,\theta^\ast)=\psi(y),$ which is   a contradiction. Therefore $r<{4}/{\sqrt {3\kappa}}$ if the induced homomorphism $\pi_1(\Sigma)\to \pi_1(\s^1(r))$ is not trivial.
 This completes the proof.
\end{proof}

 As an application, consider the sphere $\s^2$ endowed with a Riemannian metric $g$ of Gaussian curvature $K\geq \kappa>0,$  for some positive constant $\kappa$  and let $r\geq {4}/{\sqrt {3\kappa}}.$ Under these hypotheses, it follows from  Theorem \ref{thm:closed} and Theorem 3 in \cite{ros-meso} that an isoperimetric region in $\s^2(g)\times\s^1(r)$ is either a slab or a domain bounded by a surface of genus $\leq 2.$ Indeed, the boundary $\Sigma$ of an isoperimetric domain $\Omega$  in  $(\s^2,g)\times \s^1(r)$ lifts to an embedded stable 
closed CMC surface in $(\s^2,g)\times\r$ by our result. By Theorem 3 in \cite{ros-meso},  $\Sigma$  has genus at most 2 if it is connected and is a union of a finite number of horizontal slices if it is disconnected. 
 In the latter case,  $\Omega$ has to be a finite union of disjoint slabs $\s^2\times [\theta_i,\theta_i+\alpha_i], i=1,\dots,n,$ in $\s^2\times \s^1(r).$ Note that a slab of width $\sum_{i=1}^n\alpha_i$ has the same volume as the union $\mathcal U=\cup_{i=1}^{n}\s^2\times [\theta_i,\theta_i+\alpha_i]$ and its boundary has area less than the area of $\partial \mathcal U.$ As $\Omega$ is an isoperimetric domain, it has to be a slab in this case. 


\end{document}